\newcommand{\F}{\mathbb{F}}
\renewcommand{\b}{\mathbb{B}}
\newcommand{\W}{\mathbb{W}}
\newcommand{\mf}[1]{\ensuremath{\mathfrak #1}}
\newcommand{\pr}[2]{\ensuremath{(#1 \diamond #2)}}
\newcommand{\D}{\mathcal{D}}
\newcommand{\A}{\mathcal{A}}
\newcommand{\B}{\mathcal{B}}
\newcommand{\C}{\mathcal{C}}
\newcommand{\bn}{\ensuremath{\beta}}
\renewcommand{\d}{\partial}
\theoremstyle{plain}
\newtheorem{thm}{Theorem}[section]
\newtheorem{cor}[thm]{Corollary}
\newtheorem{lem}[thm]{Lemma}
\newtheorem{prop}[thm]{Proposition}
\theoremstyle{definition}
\newtheorem{defn}[thm]{Definition}
\newtheorem{example}[thm]{Example}
\theoremstyle{remark}
\title{Homology of the Boolean Complex}
\author{K\'{a}ri Ragnarsson}
\author[Bridget Eileen Tenner]{Bridget Eileen Tenner$^\dagger$}
\thanks{$^\dagger$ Research partially supported by a DePaul University Faculty Summer Research Grant.} 
\address{Department of Mathematical Sciences, DePaul University, Chicago, Illinois}
\email{kragnars@math.depaul.edu, bridget@math.depaul.edu}
\subjclass[2010]{Primary 20F55; Secondary 05C99, 05E15, 06A07}
\begin{document}

\begin{abstract}

We construct and analyze an explicit basis for the homology of the boolean complex of a finite simple graph.  This provides a combinatorial description of the spheres in the wedge sum representing the homotopy type of the complex.  We assign a set of derangements to any finite simple graph.  For each derangement, we construct a corresponding element in the homology of the complex, and the collection of these elements forms a basis for the homology of the boolean complex.  In this manner, the spheres in the wedge sum describing the homotopy type of the complex can be represented by a set of derangements.

We give an explicit, closed-form description of the derangements that can be obtained from any finite simple graph, and compute this set for several families of graphs.  In the cases of complete graphs and Ferrers graphs, these calculations give bijective proofs of previously obtained enumerative results.\\

\noindent \emph{Keywords.} Coxeter system, boolean complex, homology, derangement, complete graph, Ferrers graph, staircase shape
\end{abstract}

\maketitle

\section{Introduction}
In \cite{ragnarsson-tenner}, we developed the boolean complex of a finitely generated Coxeter system, based on work by the second author in \cite{tenner}, and analyzed its homotopy type. The boolean complex of a Coxeter system can be constructed from the unlabeled Coxeter graph of the system. We take this as a starting point and talk about the boolean complex $\Delta(G)$  of a finite simple graph $G$, keeping in mind that any such graph can be the unlabeled Coxeter graph of a Coxeter system. Our main result in \cite{ragnarsson-tenner} was to show that the boolean complex of any finite simple graph has the homotopy type of a wedge of spheres. More precisely, there is a homotopy equivalence
\begin{equation} \label{eq:Intro}
 |\Delta(G)| \simeq \bigvee_{i=1}^{\bn(G)}  S^{|G| - 1},
\end{equation}
where the \emph{boolean number} $\bn(G)$ is a graph invariant which can be calculated recursively using edge operations. That this complex is homotopy equivalent to a wedge of spheres was also shown by Jonsson and Welker \cite{jonsson}, who work in a more general context and do not address the question of the number of spheres in this particular setting. In the special case where $G$ is a complete graph, $\Delta(G)$ is a complex of injective words, and Farmer \cite{farmer} and Bj\"orner--Wachs \cite{bjorner-wachs} each showed that this complex has the homotopy type of a wedge of spheres. 
The number of spheres in this case was enumerated by Reiner--Webb in \cite{reiner-webb}.

The boolean number is an interesting graph invariant, several examples of which were calculated in \cite{ragnarsson-tenner} and \cite{ckrt}. For some families of graphs the boolean number has intriguing enumerative properties. One notable example are the complete graphs, for which we showed that $\bn(K_n)$ is the number of derangements of an $n$-element set in \cite{ragnarsson-tenner}, recovering the enumerative result of Reiner--Webb from \cite{reiner-webb} mentioned above. Another example are the Ferrers graphs of staircase shapes, whose boolean numbers were shown in \cite{ckrt} to be the Genocchi numbers of the second kind, which also enumerate derangements with alternating excedances.

In this paper we give bijective proofs of these enumerative results and present a combinatorial description of the spheres appearing in the wedge sum in \eqref{eq:Intro}.  This meaning is not apparent from the proofs in the papers cited above. Each of these proofs uses either discrete Morse theory (\cite{ragnarsson-tenner}) or shellability (\cite{farmer, jonsson, bjorner-wachs}) to collapse most of the respective complexes down to a point, leaving only cells of maximal dimension connected to a single point, which results in a homotopy type of a wedge of spheres. However, which maximal cells remain at the end of the collapsing process depends on a series of choices, and the cells themselves do not form spheres within $\Delta(G)$. Therefore this process does not shed much light on what role the spheres in \eqref{eq:Intro} really play.

Our approach in this paper is via the homology of the boolean complex. Of course, since $\Delta(G)$ has the homotopy type of a group of $(|G|-1)$-dimensional spheres, we know that the reduced homology of $\Delta(G)$ is a graded group of dimension $\bn(G)$ concentrated in degree $|G|-1$. Rather than calculating the homology groups, our goal is to find, and analyze, an explicit basis for the homology of $\Delta(G)$. Each element in this basis is a formal sum of cells in $\Delta(G)$, and taking the union of the closure of these cells in $\Delta(G)$ gives a subcomplex that has the homotopy type of a sphere. (We actually take homology with $\F_2$ coefficients but the basic principle is the same.) The homological generators of the boolean complex are therefore good representatives for the spheres in \eqref{eq:Intro}.

The route from graphs to homological generators takes a detour through derangements. Given a graph $G$ we recursively construct a set of derangements of the vertex set of $G$ using the same edge operations used for calculating the boolean number of a graph. To each of these derangements we associate an element in the homology of $\Delta(G)$, and the central result of the paper is Theorem \ref{thm:main} which says that the outcome of these two steps is a basis for the homology of $\Delta(G)$. Thus we can regard the derangements associated to a graph $G$ as combinatorial representatives for the spheres in the wedge sum describing the homotopy type of $\Delta(G)$. Another important result is Theorem \ref{thm:criterion} where we give an explicit, closed-form description of the set of derangements associated to $G$. When $G$ is a complete graph, this is the set of all derangements, and when $G$ is the Ferrers graph associated to a staircase shape, this is the set of permutations of alternating excedances. Thus we obtain bijective proofs of the enumerative results mentioned earlier.

\section{Background}\label{section:background}

\subsection{Boolean complexes}
Boolean complexes were introduced in \cite{ragnarsson-tenner} as cell complexes associated to Coxeter systems. In fact, as noted in \cite{ragnarsson-tenner}, the boolean complex of a Coxeter system only depends on its unlabeled Coxeter graph, and therefore one can equally well think of boolean complexes associated to finite simple graphs. We shall focus on the graph perspective here, referring the reader to \cite{ragnarsson-tenner} for the relationship to Coxeter systems.

Given a graph $G$ with vertex set $V(G)$, let $\W(G)$ be set of words on letters in $V(G)$ without repetition, ordered by subword inclusion. Form an equivalence relation $\sim$ on $\W(G)$ by saying that two letters commute if and only if they are not adjacent in $G$. More precisely, if
$a_1,\ldots,a_k,s,t,b_1,\ldots,b_\ell$ are distinct vertices in $G$, and $s$ and $t$ are not adjacent, then
 \[ a_1\cdots a_k s t b_1 \cdots b_\ell \sim a_1\cdots a_k t s b_1 \cdots b_\ell \]
in $\W(G)$, and $\sim$ is the equivalence relation generated by this condition. Let $\b(G)$ be the set of equivalence classes in $\W(G)$ with respect to $\sim$, with partial order induced by the order relation on $\W(G)$. We call $\b(G)$ the \emph{boolean poset} of $G$ (this was called the boolean ideal in \cite{ragnarsson-tenner}), and say that an element in $\W(G)$ is a \emph{word representative} for its equivalence class in $\b(G)$.

The boolean poset is ranked by word length, where we adopt the convention that a word of length $k+1$ has rank $k$ (so the empty word has rank $-1$). For each integer $k$, let $\b_k(G) \subseteq \b(G)$ be the subset of elements of rank $k$. It is not hard to see that $\b(G)$ is a simplicial poset, meaning that the principal (lower) order ideal of every element is isomorphic to a boolean algebra. Consequently (\cite{bjorner}), there is an associated regular cell complex $\Delta(G)$ that has $\b(G)$ as face poset, where the dimension of each cell in $\Delta(G)$ equals the rank of the corresponding element in $\b(G)$. The empty word corresponds to the empty cell and will henceforth be ignored. We refer to $\Delta(G)$ as the \emph{boolean complex} of $G$.

Note that, although the closure of each cell in $\Delta(G)$ is a simplex, $\Delta(G)$ is not a simplicial complex. This is because a cell in $\Delta(G)$ is not determined by the $0$-cells in its closure.  An example of this, described in full detail in \cite{ragnarsson-tenner}, is the graph consisting of two vertices and an edge between them.  Instead $\Delta(G)$ is a $\Delta$-complex, as defined in \cite{hatcher}.

\subsection{Homotopy type of boolean complexes}
One obtains a geometric realization $|\Delta(G)|$ in the standard way: take a geometric simplex of dimension $k$ for each $k$-cell in $\Delta(G)$, and glue simplices together according to the face poset. By the homotopy type of the boolean complex we mean the homotopy type of $|\Delta(G)|$. The goal of this section is to recall the main result in \cite{ragnarsson-tenner}, stated below as Theorem \ref{thm:rt}, which shows that the boolean complex of a graph $G$ with $n$ vertices has the homotopy type of a wedge of $(n-1)$-spheres. In \cite{ragnarsson-tenner} we gave a recursive formula for computing the number of spheres occurring in the wedge sum. This number is a graph invariant, which we denote $\bn(G)$ and call the \emph{boolean number} of $G$. The recursion was given in terms of graph operations, defined as follows. 

\begin{defn} \label{def:EdgeOperations}
Let $G$ be a finite simple graph and $e$ an edge in $G$.
\begin{itemize}
\item \emph{Deletion:} $G-e$ is the graph obtained by deleting the edge $e$.
\item \emph{Simple contraction:} $G/e$ is the graph obtained by contracting the edge $e$ and then removing all loops and redundant edges.
\item \emph{Extraction:} $G-[e]$ is the graph obtained by removing the edge $e$ and its incident vertices.
\end{itemize}
\end{defn}

In the statement of the theorem, and throughout the paper, we let $|G|$ denote the number of vertices in a finite graph $G$, and we let $\delta_n$ denote the graph with $n$ vertices and no edges. Furthermore, we use the symbol $\simeq$ to denote homotopy equivalence, and write $b \cdot S^r$ for a wedge sum of $b$ spheres of dimension $r$. (In particular $0 \cdot S^r$ is a point.)

\begin{thm}[\cite{ragnarsson-tenner}]\label{thm:rt}
For every nonempty, finite simple graph $G$, there is an integer $\bn(G)$ so that
\begin{equation*}
|\Delta(G)| \simeq \bn(G) \cdot S^{|G|-1}.
\end{equation*}
Moreover, the values $\bn(G)$ can be computed using the recursive formula
\begin{equation*}
  \bn(G) = \bn(G - e) + \bn(G/e) + \bn(G -[e]),
\end{equation*}
if $e$ is an edge in $G$ such that $G -[e]$ is nonempty, with initial conditions
$$\bn(K_2) = 1\text{\ and\ } \bn(\delta_n) = 0,$$
where $K_2$ is the complete graph on two vertices.
\end{thm}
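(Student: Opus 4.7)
The plan is to prove both assertions by induction on the number of edges of $G$. The base cases admit direct verification: for $G = K_2$, the complex $\Delta(K_2)$ has two $0$-cells $s$ and $t$ together with two distinct $1$-cells, the classes of $st$ and $ts$ (distinct because $s$ and $t$ are adjacent in $K_2$ and hence do not commute in $\b(K_2)$); these assemble into a circle, giving $|\Delta(K_2)| \simeq S^1$ and confirming $\bn(K_2) = 1$. For $G = \delta_n$, every pair of vertices commutes, so equivalence classes in $\b(\delta_n)$ are simply subsets of the vertex set, and $\Delta(\delta_n)$ becomes a full $(n-1)$-simplex (hence contractible), consistent with $\bn(\delta_n) = 0$.

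For the inductive step, fix an edge $e = \{s,t\}$ for which $G - [e]$ is nonempty. The approach would be to construct an acyclic matching on $\b(G)$ whose critical cells lie in only two dimensions: a single $0$-dimensional critical cell (the basepoint) and the remaining critical cells all in dimension $|G| - 1$. By discrete Morse theory, the existence of such a matching immediately implies that $|\Delta(G)|$ is homotopy equivalent to a wedge of $(|G|-1)$-spheres. The number of top-dimensional critical cells is then taken to be $\bn(G)$, and the task reduces to showing that this number satisfies the claimed recursive formula.

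To build the matching, I would exploit the edge $e$ by pairing boolean classes in $\b(G)$ according to the positions and relative order of the letters $s$ and $t$ in canonical word representatives. A careful design should make the top-dimensional critical cells partition into three natural families: (i) those not involving the commutation behavior of $e$, in bijection with top-dimensional critical cells of an analogous matching on $\Delta(G - e)$; (ii) those in which $s$ and $t$ appear adjacently in a ``locked'' order that mimics the contracted vertex of $G/e$, contributing $\bn(G/e)$ cells shifted up by one dimension; and (iii) a remaining family forming a double-suspension contribution corresponding to $\bn(G - [e])$. Combined with the inductive hypothesis applied to the three strictly smaller graphs $G - e$, $G/e$, and $G - [e]$, this yields the recursion. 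The principal obstacle is that elements of $\b(G)$ are equivalence classes of words rather than individual words, so the matching must be defined on classes; verifying well-definedness and acyclicity, and establishing the bijective correspondences between the three families of critical cells and the critical cells of the smaller complexes, constitute the bulk of the technical work.
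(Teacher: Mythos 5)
This theorem is not proved in the present paper at all: it is imported verbatim from \cite{ragnarsson-tenner}, and the introduction of the present paper only tells us that the proof there proceeds via discrete Morse theory, collapsing the complex down to a point with cells of maximal dimension attached. Your overall strategy --- induction on the number of edges, an acyclic matching on $\b(G)$ with one critical $0$-cell and all other critical cells in dimension $|G|-1$, and a three-way classification of the top critical cells matching the three edge operations --- is therefore aligned with the approach of the cited reference, and your verification of the base cases $K_2$ and $\delta_n$ is correct (two $0$-cells and two $1$-cells forming a circle in the first case; the full $(n-1)$-simplex in the second).

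However, as a proof the proposal has a genuine gap: the acyclic matching, which is the entire mathematical content of the argument, is never constructed. You write that ``a careful design should make the top-dimensional critical cells partition into three natural families,'' but this is precisely the claim that needs to be proved, and nothing in the proposal constrains what the matching actually is. Without an explicit matching one cannot verify well-definedness on commutation classes (note that when $s$ and $t$ are adjacent their relative order in a class is well defined, but their positions and their order relative to other letters are not, so ``canonical word representatives'' must be specified with care), one cannot verify acyclicity, and one cannot exhibit the three bijections with critical cells of $\Delta(G-e)$, $\Delta(G/e)$, and $\Delta(G-[e])$ that would yield the recursion $\bn(G) = \bn(G-e)+\bn(G/e)+\bn(G-[e])$. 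You acknowledge this yourself by deferring ``the bulk of the technical work,'' but that deferred work is the theorem; what remains is a correct framing of the problem rather than a proof. If you want to complete the argument along these lines you will need to write down the pairing rule explicitly (for instance, by distinguishing classes according to whether $s$ immediately precedes $t$, whether $t$ precedes $s$, and whether both letters occur), prove that it descends to $\b(G)$, and check acyclicity, e.g.\ via a patchwork/cluster lemma.
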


The following results are also useful.

\begin{cor}[\cite{ragnarsson-tenner}]\label{cor:disjoint point}
A finite simple graph $G$ has an isolated vertex if and only if $\bn(G)=0$.  That is, the center of a Coxeter group contains a generator of the group if and only if the group's boolean complex is contractible.
\end{cor}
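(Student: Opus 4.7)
The plan is to prove both directions by induction using the recursion and initial conditions from Theorem \ref{thm:rt}. Note first that Theorem \ref{thm:rt} implies $\bn(G) \geq 0$ for every finite simple graph $G$, since it counts spheres in a wedge decomposition.

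For the forward direction, I would assume $G$ has an isolated vertex $v$ and induct on the quantity $|V(G)| + |E(G)|$ to show $\bn(G) = 0$. The base case is $E(G) = \emptyset$, where $G = \delta_{|G|}$ and $\bn(G) = 0$ by the initial condition. Otherwise pick any edge $e$; since $v$ is isolated, $e$ is not incident to $v$, so $v$ remains an isolated vertex of each of $G - e$, $G/e$, and $G - [e]$. All three of these graphs are strictly smaller in the inductive quantity, and $G - [e]$ is nonempty because it still contains $v$, so the recursion applies and each term vanishes by the inductive hypothesis, yielding $\bn(G) = 0$.

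For the reverse direction, I would prove the contrapositive: if $G$ has no isolated vertex, then $\bn(G) > 0$, by induction on $|V(G)|$. The base case $|V(G)| = 2$ forces $G = K_2$, with $\bn(K_2) = 1 > 0$ by the initial condition. For the inductive step, take $|V(G)| \geq 3$ and pick any edge $e = uv$. If $e$ is not an isolated $K_2$ component of $G$, then at least one of $u, v$ has another neighbor, so the single vertex of $G/e$ obtained by identifying $u$ and $v$ has a neighbor, and $G/e$ inherits the no-isolated-vertex property; thus $\bn(G/e) > 0$ by the inductive hypothesis. If instead $e$ is an isolated $K_2$ component, then every vertex outside $\{u, v\}$ has its full neighborhood intact in $G - [e]$, so $G - [e]$ is nonempty (since $|V(G)| \geq 3$) and has no isolated vertex, giving $\bn(G - [e]) > 0$ by the inductive hypothesis. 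In either subcase $G - [e]$ is nonempty, so the recursion together with non-negativity of the remaining two terms forces $\bn(G) > 0$.

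The main obstacle is this case split in the reverse direction. It is necessary because no single edge operation always preserves the no-isolated-vertex property: deleting an edge can isolate its endpoints, and extracting an edge can strand vertices whose only neighbors lay in $\{u, v\}$. The dichotomy ``$e$ is an isolated $K_2$'' versus ``$e$ has another incident edge'' produces a strictly smaller graph without isolated vertices via $G - [e]$ or $G/e$ respectively, closing the induction.
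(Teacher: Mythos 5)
Your argument is correct and complete. Note, however, that this paper does not reprove the corollary at all: it is imported from \cite{ragnarsson-tenner}, so there is no in-paper proof to match against. Measured against the tools the paper does provide, your reverse direction (contrapositive, induction on $|V(G)|$ with the case split on whether $e$ is an isolated $K_2$ component) is essentially the standard recursion-based argument, and the case split is genuinely needed for exactly the reason you give. Your forward direction also works, but it can be collapsed to one line using Proposition~\ref{prop:disjoint union}: an isolated vertex $v$ gives $G = H \sqcup \delta_1$, hence $\bn(G) = \bn(H)\,\bn(\delta_1) = 0$; topologically, $\Delta(G) = \Delta(H) * \Delta(\delta_1)$ is a cone, hence contractible. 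That route buys brevity and explains the word ``contractible'' in the statement, whereas your induction has the virtue of using only the recurrence and initial conditions of Theorem~\ref{thm:rt}. One small point worth making explicit in your write-up: the nonnegativity $\bn(G) \ge 0$, which you invoke to drop two of the three terms in the recursion, comes from the topological interpretation in Theorem~\ref{thm:rt} (a wedge of $\bn(G)$ spheres), not from the recurrence itself; you do say this, and it is the right justification.
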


\begin{prop}[\cite{ragnarsson-tenner}]\label{prop:disjoint union}
If $G = H_1 \sqcup H_2$ for graphs $H_1$ and $H_2$, then
\[ \Delta(G) = \Delta(H_1) * \Delta(H_2), \]
where $*$ denotes simplicial join, and consequently
\[ |\Delta(G)| \simeq \bn(H_1)\bn(H_2) \cdot S^{|H_1| + |H_2| -1}. \]
In particular, $\bn(G) = \bn(H_1) \bn(H_2)$.
\end{prop}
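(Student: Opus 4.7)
The plan is to establish the cellular identity $\Delta(G) = \Delta(H_1) * \Delta(H_2)$ directly at the level of face posets, and then derive the homotopy type from the join formula for wedges of spheres.

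First I would analyze $\b(G)$ when $G = H_1 \sqcup H_2$. Since no vertex of $H_1$ is adjacent to any vertex of $H_2$ in $G$, any two letters coming from different components commute in $\W(G)$. Therefore every word $w \in \W(G)$ is $\sim$-equivalent to one of the form $w = u \cdot v$, where $u$ is a word on $V(H_1)$ and $v$ is a word on $V(H_2)$. The commutation relations inherited from $G$ on the letters of $u$ are exactly those of $H_1$, and similarly for $v$ and $H_2$. From this I would deduce that sending the class of $u \cdot v$ to the pair $([u], [v]) \in \b(H_1) \sqcup \{\emptyset\}) \times (\b(H_2) \sqcup \{\emptyset\})$ is a well-defined isomorphism of (simplicial) posets onto the face poset of the simplicial join $\Delta(H_1) * \Delta(H_2)$, with rank behaving additively (a $k$-cell in $\b(H_1)$ and an $\ell$-cell in $\b(H_2)$ giving a $(k+\ell+1)$-cell in $\b(G)$, matching the dimension formula for joins). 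This yields the cell complex equality $\Delta(G) = \Delta(H_1) * \Delta(H_2)$.

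Next I would pass to the homotopy statement. By Theorem~\ref{thm:rt},
\[
|\Delta(H_i)| \simeq \bn(H_i) \cdot S^{|H_i|-1} \quad \text{for } i = 1, 2.
\]
Since the join of CW-complexes satisfies $|X * Y| \simeq \Sigma(|X| \wedge |Y|)$ for well-pointed spaces, and since smash products distribute over wedges while $S^m \wedge S^n \simeq S^{m+n}$, I would compute
\[
|\Delta(G)| \simeq \bigl(\bn(H_1) \cdot S^{|H_1|-1}\bigr) * \bigl(\bn(H_2) \cdot S^{|H_2|-1}\bigr) \simeq \bn(H_1)\bn(H_2) \cdot S^{|H_1|+|H_2|-1}.
\]
Comparing with Theorem~\ref{thm:rt} applied to $G$ itself then forces $\bn(G) = \bn(H_1)\bn(H_2)$.

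The main obstacle, though not a deep one, is step one: verifying carefully that the canonical factorization $w \sim u \cdot v$ is compatible with the equivalence relation, i.e.\ that $[u \cdot v] = [u' \cdot v']$ in $\b(G)$ if and only if $[u] = [u']$ in $\b(H_1)$ and $[v] = [v']$ in $\b(H_2)$. This requires checking that any sequence of commutation moves in $\W(G)$ connecting $u \cdot v$ to $u' \cdot v'$ can be replayed as moves entirely within each component, which follows from the fact that cross-component letters always commute and so can be shuffled back into the standard form without affecting the intra-component moves. Everything after that is formal.
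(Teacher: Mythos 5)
Your argument is correct. This proposition is imported from \cite{ragnarsson-tenner} and is not reproved in the present paper, so there is no internal proof to compare against; your route --- identifying $\b(H_1 \sqcup H_2)$ with the face poset of the join by splitting each commutation class into its two component subwords (the well-definedness point you flag is handled by noting that each component subword is an invariant of the commutation class), and then computing the join of wedges of spheres via $X * Y \simeq \Sigma(X \wedge Y)$ together with distributivity of smash over wedge --- is the standard one and matches the argument in the cited reference.
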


\subsection{Homology of boolean complexes}
In this subsection we establish the notation and conventions for discussing and computing homology in this paper. We assume the reader is familiar with the procedure for computing the homology of a $\Delta$-complex, which is well known and described, for example, in \cite{hatcher}. Since we know a priori that the boolean complex of a finite simple graph has the homotopy type of a wedge of spheres, this process can be greatly simplified for boolean complexes. To further simplify matters, we always take homology with $\F_2$-coefficients --- thus avoiding issues of orientation --- and write $H_*(-)$ for $H_*(-,\F_2)$ from now on. 

Let $G$ be a finite simple graph with vertex set $V$. By Theorem \ref{thm:rt}, $\tilde{H}^*(\Delta(G))$ is a graded $\F_2$-vector space of dimension $\bn(G)$, concentrated in degree $|G|-1$. Since $\Delta(G)$ has no cells of dimension above $|G|-1$, this means we can compute the homology of $\Delta(G)$ simply as the kernel of the differential $\d^G \colon C_{|G|-1}(\Delta(G)) \to C_{|G|-2}(\Delta(G)),$ where $C_k(\Delta(G))$ denotes the free $\F_2$-vector space with basis $\b_k(G)$. In our setting, this differential can be described as a sum
\[ \d^G := \sum_{v \in V} \d^G_{v} \colon C_{|G|-1}(\Delta(G)) \to C_{|G|-2}(\Delta(G))\, , \]
where, for $v \in V$, the homomorphism 
\[ \d^G_{v} \colon C_{|G|-1}(\Delta(G)) \to C_{|G|-2}(\Delta(G)), \]
sends a basis element $\sigma \in \b_{|G|-1}(G)$ to the string obtained by deleting $v$. When the graph $G$ is clear from the context, we will drop the superscript $G$ and just write $\d$ and $\d_v$ for $\d^G$ and $\d^G_v$, respectively. We refer to an element in the kernel of $d$ as a \emph{homological cycle}.

Loosely speaking, the differential $\d$ sends a cell in $\bn(G)_{|G|-1}$ to its boundary, and a homological cycle is then a formal sum of cells whose boundaries cancel. For a nonzero homological cycle, the closure of the corresponding union of $(|G|-1)$-simplices in $|\Delta(G)|$ forms a $(|G|-1)$-sphere embedded in $|\Delta(G)|$. Thus finding a canonical set of generators for the homology of a boolean complex amounts to giving a combinatorial meaning to the spheres appearing in the wedge-sum representation of its homotopy type.

We will repeatedly use the following lemma to identify homological cycles. The proof is obvious.

\begin{lem} \label{lem:dv}
Let $G$ be a finite simple graph with vertex set $V(G)$. For $x \in C_{|G|-1}(G)$ the following are equivalent:
\begin{enumerate}
 \item $\d(x) = 0$, and
 \item $\d_v(x) = 0$ for each $v \in V(G)$.
\end{enumerate}
\end{lem}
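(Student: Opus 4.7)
The plan is to exploit a natural direct-sum decomposition of $C_{|G|-2}(\Delta(G))$ that makes the contributions of the various $\d_v$ manifestly independent. The implication $(2)\Rightarrow(1)$ is immediate from the definition $\d = \sum_{v \in V(G)} \d_v$, so the content of the lemma lies in the reverse direction.

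First I would observe that every cell $\sigma \in \b_{|G|-1}(G)$ is represented by a word that uses every vertex of $G$ exactly once, since $\sigma$ has rank $|G|-1$ (i.e., word length $|G|$) and word representatives have no repeated letters. Consequently, applying $\d_v$ to $\sigma$ produces either $0$ or a cell whose word representative uses precisely the vertices in $V(G)\setminus\{v\}$. For each $v \in V(G)$, let $C^{(v)} \subseteq C_{|G|-2}(\Delta(G))$ denote the $\F_2$-subspace spanned by those cells in $\b_{|G|-2}(G)$ whose word representatives omit the vertex $v$. Since every $(|G|-2)$-cell omits exactly one vertex, we obtain a direct-sum decomposition
\[ C_{|G|-2}(\Delta(G)) = \bigoplus_{v \in V(G)} C^{(v)}, \]
and by construction $\d_v$ maps $C_{|G|-1}(\Delta(G))$ into $C^{(v)}$.

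Given this decomposition, if $x \in C_{|G|-1}(\Delta(G))$ satisfies $\d(x) = 0$, then $\sum_{v \in V(G)} \d_v(x) = 0$ is an expression of $0$ as a sum of elements lying in distinct summands of the direct sum, forcing $\d_v(x) = 0$ for every $v$. The only step that could be considered an obstacle is verifying the direct-sum decomposition, which amounts to noting that the equivalence relation $\sim$ on $\W(G)$ preserves the underlying set of letters used in each word, so the ``missing vertex'' is a well-defined invariant of a cell in $\b_{|G|-2}(G)$. This is immediate from the definition of $\sim$, which only permutes letters, and so the lemma follows without further calculation.
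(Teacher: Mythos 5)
Your proof is correct: the paper simply declares this lemma ``obvious,'' and the direct-sum decomposition of $C_{|G|-2}(\Delta(G))$ by the missing vertex, justified by noting that $\sim$ preserves the underlying letter set, is exactly the observation the authors had in mind. Nothing further is needed.
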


\subsection{String concatenation product} \label{subsec:Product}
For a finite graph $G$, the chain complex $C_*(\Delta(G))$ has a product structure given by word concatenation. This is defined on basis elements as follows. Let $\sigma$ and $\tau$ be word representatives for generators $\bar{\sigma} \in C_k(\Delta(G))$ and $\bar{\tau} \in C_\ell(\Delta(G))$. Thus $\sigma$ is a word on $k +1 $ vertices of $G$, and $\bar{\sigma}$ is the corresponding element in $\b_k(G) \subset  C_k(\Delta(G))$, and similarly with $\tau$. We define the concatenation product by
 \[  \bar{\sigma} \bar{\tau} := 
        \begin{cases}
           \overline{\sigma \tau} \in C_{k+l+1}(\Delta(G)), &\text{if $\sigma$ and $\tau$ have no common letters,}\\
           0, &\text{if $\sigma$ and $\tau$ have common letters.}                      
        \end{cases}  
 \]
It is not hard to check that this is a well-defined, associative operation. Extending linearly, we obtain the string concatenation product sending general elements $x \in C_k(\Delta(G))$ and $y \in C_\ell(\Delta(G))$ to $xy \in C_{k+l+1}(\Delta(G))$.

\subsection{Collapsing maps} \label{subsec:Collapse}
Here we discuss the collapsing map of boolean complexes induced by an inclusion of graphs with the same vertex set, and the effect of this map in homology.

\begin{defn}
For a finite set $V$, let $K_V$ be the complete graph with vertex set $V$. When the finite set is $\{1,\ldots,n\}$, write $K_n$ for short.
\end{defn}

Notice that for a complete graph $K_V$, the boolean poset is the same as the poset of injective words: $\b(K_V) = \W(K_V)$. If $G$ is another graph with vertex set $V$, then by definition we have a projection of posets $\W(G) \to \b(G)$. Since $\W(G) = \W(K_V)$, we can interpret this as a map of boolean posets
 \[ \pi_G \colon \b(K_V) \twoheadrightarrow \b(G),\]
which we call a \emph{collapsing map}. This collapsing map induces a map on boolean complexes $\pi_G \colon \Delta(K_V) \to \Delta(G)$ and consequently a map of chain complexes 
\[ {\pi_G}_* \colon C_*(\Delta(K_V)) \to C_*(\Delta(G)). \] 
Being a map of chain complexes means that ${\pi_G}_*$ is a map of graded groups that respects the differential (that is, $\partial_*^G \circ {\pi_G}_* = {\pi_G}_* \circ \partial_*^{K_V}$), and this implies that we have an induced map in homology
 \[ {\pi_G}_* \colon H_*(\Delta(K_V)) \to H_*(\Delta(G)), \]
which we also refer to as the collapsing map.

More generally, when $G$ is a subgraph of a graph $G'$, and both graphs have the same vertex set $V$, there is a collapsing map of boolean posets $\pi_G^{G'} \colon \b(G') \twoheadrightarrow \b(G)$ which satisfies $\pi_G^{G'} \circ \pi_{G'} = \pi_{G}$. This map also induces a map of boolean complexes and a map in homology ${\pi_G^{G'}}_* \colon H_*(\Delta(G')) \to H_*(\Delta(G))$. 

When the graphs involved are clear from the context and there is no danger of confusion, we will write $\pi$ instead of $\pi_G$ or $\pi_G^{G'}$.

\section{From graphs to derangements} \label{sec:GtoD}

In this section we describe an algorithm that constructs a set of derangements of the vertex set of a finite simple graph $G$. The algorithm is recursive, using the same edge operations involved in the recursive computation of the boolean number of the graph, and as a consequence the number of derangements produced is equal to the boolean number of the graph. We also give an explicit, closed-form description of the resulting set of derangements in Theorem \ref{thm:criterion}. 

In Section~\ref{sec:DtoH} we will show how derangements of the vertex set of a graph give rise to homology cycles for the boolean complex of the graph, and in Section~\ref{sec:MainThm} we prove that the homology cycles coming from the derangements constructed in this section form a basis for the homology of the boolean complex. Therefore the derangements constructed here are key to understanding the combinatorial meaning of the spheres representing the homotopy type of the boolean complex.

\begin{defn}
A \emph{derangement} of a finite set $V$ is a permutation of $V$ that has no fixed points.  The set of derangements of $V$ is denoted by $\D_V$.  When $V$ is the finite set $\{1,\dots,n\}$ we may write $\D_n$ for $\D_V$.  The \emph{derangement number} $d_n$ is the cardinality of $\D_n$.
\end{defn}

It is useful here to write permutations in cycle notation.  For example, $(134)(26)(587)$ is the map $1\mapsto3$, $2\mapsto6$, $3\mapsto4$, $4\mapsto1$, $5\mapsto8$, $6\mapsto2$, $7\mapsto5$, and $8\mapsto7$.  Thus a derangement is a permutation which can be written as a product of disjoint cycles, all of which have length at least two.  For small values of $n$, the derangements of $\{1, \ldots, n\}$ and the derangement numbers $d_n$ are given in Table~\ref{table:derangements}.

\begin{defn}
Let $V$ be a linearly ordered finite set. A derangement of $V$ is written in \emph{standard cycle form} if it is written as a product of disjoint cycles so that the minimum element of a cycle appears as the leftmost letter in that cycle, and the cycles are arranged from left to right in increasing values of minimum letters.
\end{defn}

\begin{example}
The permutation $(134)(26)(587)$ is written in standard cycle form, while the alternative representations $(134)(587)(26)$ and $(341)(26)(587)$ are not.
\end{example}

\begin{table}[htbp]\begin{center}
\begin{tabular}{c|l|c}
$n$ & $\mathcal{D}_n = $ Derangements of $\{1, \ldots, n\}$ & $d_n$\\
\hline
1 & none & 0\\
2 & $(12)$ & 1\\
3 & $(123)$, $(132)$ & 2\\
4 & $(1234)$, $(1243)$, $(1324)$, $(1342)$, $(1423)$, $(1432)$, $(12)(34)$, $(13)(24)$, $(14)(23)$ & 9
\end{tabular}
\end{center}
\caption{Derangements and derangement numbers for small cases.}\label{table:derangements}
\end{table}

The derangement-producing algorithm described below assumes a linear ordering of the vertex set, and its output depends on the order. Formally, we are therefore recursively defining subsets $\D(G, \le) \subseteq \D_{V(G)}$ and a map
 \[ (G,\leq) \longmapsto \D(G,\leq), \]
where $V(G)$ is the vertex set of $G$, and $\leq$ is a linear order on $V(G)$. Such a pair $(G,\leq)$ is usually referred to as an \emph{ordered graph}. In the remainder of the paper we will abuse notation by referring to an ordered graph $G$ instead of $(G, \le)$, and $\D(G)$ instead of $\D(G, \le)$, taking the linear ordering to be understood.  This will not cause any confusion since we only consider one linear ordering for a given graph. In our examples, vertices will be labeled by integers and the ordering is clear.   Note, however, that the derangements in the set $\D(G)$ depend on the ordering of $G$.  That is, a different initial labeling of the graph will yield different derangements.  Thus, a cleverly chosen labeling can be the key to proving certain characteristics of the set of derangements $\D(G)$.

There are three components to the recursive definition of $\D(G)$, including the initial conditions, and these are highlighted as three separate bullet points in the following discussion.

Given a nonempty finite simple ordered graph $G$ with maximal vertex $t$, the initial conditions are as follows.
\begin{itemize}
\item If $V(G) = \{s,t\}$ and there is an edge between these vertices, then $\D(G) := \{(st)\}$.
\item If $t$ is an isolated vertex, then set $\D(G) := \emptyset$.
\end{itemize}

\begin{defn}
Let $G$ be a finite simple ordered graph. If $t$ is the maximal non-isolated vertex of $G$ and $s$ is maximal among vertices adjacent to $t$ then the \emph{maximal edge} of $G$ is the edge $\{s,t\}$.
\end{defn}

In the recursive step we perform edge operations on the maximal edge of $G$.  First, we must explain how, for a finite simple  ordered graph $G$ and an edge $e = \{s,t\}$ in $G$, applying the three edge operations (deletion, simple contraction, and extraction) to $G$ behaves on ordered graphs, and how permutations of the vertex sets of the resulting graphs, $G-e$, $G/e$ and $G-[e]$ give rise to permutations of the vertex set of $G$. 

\begin{quote}
\textbf{Deletion:} $G-e$ has the same vertex set as $G$ and is given the same ordering of vertices. A permutation of $V(G-e)$ is a permutation of $V(G)$.  \\
\textbf{Simple contraction:} Let $x$ be the vertex in $G/e$ obtained by contracting the edge $e$. We obtain a linear ordering on $V(G/e)$ by letting $x$ take the place of $s$ in the ordering on $V(G)$. Given a permutation $w$ of $V(G/e)$, let $w_{st}$ be the permutation of $V(G)$ obtained by writing $w$ in cycle notation, and replacing $x$ by $st$ in the appropriate cycle. \\
\textbf{Extraction:} $V(G - [e])$ is a subset of $V(G)$ and the linear ordering of $V(G)$ restricts to a linear ordering of $V(G-[e])$. Given a permutation $w$ of $V(G-[e])$, a permutation $w\sqcup (st)$ on $V(G)$ is given by applying $w$ to any element of $V(G-[e]) = V(G)\setminus \{s,t\}$ and the transposition $(st)$ to the elements $s$ and $t$. 
\end{quote}

We are now able to state the recursive condition for computing $\D(G)$. As before, $t$ is the maximal vertex of $G$ in this discussion.

\begin{itemize}
 \item If $t$ is not an isolated vertex in $G$ and $e = \{s,t\}$ is the maximal edge in $G$, and $G$ has at least three vertices, then set
\begin{equation}\label{eq:recursive D}
\D(G) := \D(G - e) \cup \{ w_{st} : w \in \D(G/e) \} \cup \{ w\sqcup (st) : w \in \D(G - [e])\}.
\end{equation}
\end{itemize}

We record the following properties of $\D(G)$ which are preserved throughout the recursive construction.
\begin{lem} \label{lem:DforDerangment} \label{lem:Adjacent}
Let $G$ be a finite simple ordered graph and let $t$ be the maximal vertex of $G$. If $w \in \D(G)$ then $w$ is a derangement and $t$ is adjacent to $w^{-1}(t)$ in $G$.
\end{lem}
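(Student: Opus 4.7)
The natural plan is induction on the recursive construction of $\D(G)$. A well-founded measure, such as $|V(G)| + |E(G)|$, strictly decreases in each of the three recursive calls: deletion drops $|E|$ by one, contraction drops $|V|$ by one (and $|E|$ by at least one), and extraction drops $|V|$ by two. So it suffices to verify both conclusions at the base cases and then for each of the three summands in \eqref{eq:recursive D}, assuming they hold for the smaller graph.

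The base cases are immediate. If $V(G) = \{s,t\}$ with $\{s,t\}$ an edge, then $\D(G) = \{(st)\}$; the 2-cycle $(st)$ is a derangement, and $(st)^{-1}(t) = s$ is adjacent to $t$. If $t$ is isolated, then $\D(G) = \emptyset$ and there is nothing to check.

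For the recursive step, let $e = \{s,t\}$ be the maximal edge and take each summand in turn. In the deletion case, $G-e$ has the same ordered vertex set as $G$, so $t$ is still the maximal vertex of $G - e$; by induction any $w \in \D(G-e)$ is a derangement with $w^{-1}(t)$ adjacent to $t$ in $G - e$, hence also in $G$. In the extraction case, any $w \in \D(G - [e])$ is by induction a derangement of $V(G)\setminus\{s,t\}$, so $w \sqcup (st)$ is a derangement of $V(G)$, and $(w \sqcup (st))^{-1}(t) = s$ is adjacent to $t$ in $G$ by the very definition of $e$. The contraction case is the one to watch: here $t$ is not a vertex of $G/e$, but is replaced by the contracted vertex $x$. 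The key observation is that since $w \in \D(G/e)$ is a derangement, the cycle of $w$ containing $x$ has length at least two, so after replacing $x$ by the adjacent pair $s,t$ in that cycle, the resulting cycle has length at least three and no fixed points are introduced; all other cycles are unchanged, so $w_{st}$ is a derangement of $V(G)$. Finally, $w_{st}^{-1}(t) = s$, which is adjacent to $t$ in $G$.

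The main obstacle, such as it is, lies in the contraction case: one must be careful about how the cycle structure of $w$ lifts through the substitution $x \mapsto st$, and must invoke the derangement property of $w$ to rule out the degenerate case where $x$ would be a fixed point (which would make $w_{st}^{-1}(t) = t$ and break the claim). Everything else amounts to unwinding the definitions of the three edge operations and their effect on permutations.
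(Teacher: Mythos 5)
Your proof is correct and matches the paper's approach: the paper states this lemma without proof, remarking only that these properties are ``preserved throughout the recursive construction,'' which is exactly the induction on the three edge operations that you carry out. (One small inaccuracy in an aside: if $x$ were a fixed point of $w$, substituting $st$ for $x$ would produce the transposition $(st)$, so $w_{st}^{-1}(t)=s$ rather than $t$; but since $w$ is a derangement this case never arises, and your argument is unaffected.)
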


Also observe that if $G$ has any isolated vertex (maximal or otherwise), then $\D(G) = \emptyset$ because the isolated vertex will be maximal at some step in the iteration, and the second initial condition will apply. This is a special case of the following result.

\begin{prop}\label{prop:distinct derangements}
For a nonempty finite simple ordered graph $G$,
 \[ |\D(G)| = \bn(G). \]
\end{prop}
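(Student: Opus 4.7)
The plan is to prove $|\D(G)| = \bn(G)$ by induction on $|V(G)| + |E(G)|$, exploiting the parallel recursive structures defining $\D(G)$ and $\bn(G)$. The base cases are straightforward: when $G = K_2$, the definition gives $\D(G) = \{(st)\}$, so $|\D(G)| = 1 = \bn(K_2)$; when the maximal vertex of $G$ is isolated (in particular, when $G$ is edgeless or has a single vertex), $\D(G) = \emptyset$ by the second initial condition and $\bn(G) = 0$ by Corollary \ref{cor:disjoint point}.

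For the inductive step, I would consider a graph $G$ with $|V(G)| \geq 3$ and maximal vertex $t$ not isolated; let $e = \{s,t\}$ be the maximal edge. Since $|V(G-[e])| \geq 1$, both the $\bn$-recursion of Theorem \ref{thm:rt} and the $\D$-recursion \eqref{eq:recursive D} apply. The latter expresses $\D(G)$ as the union of $\D(G-e)$, $\{w_{st} : w \in \D(G/e)\}$, and $\{w \sqcup (st) : w \in \D(G-[e])\}$. The two maps $w \mapsto w_{st}$ and $w \mapsto w \sqcup (st)$ are manifestly injective, since $w$ can be recovered from the image by collapsing the consecutive pair $st$ back to a single vertex $x$, or by deleting the 2-cycle $(st)$, respectively. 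Hence the crux of the argument is to show that the three sets above are pairwise disjoint.

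The key to disjointness is to track the values $\sigma(s)$ and $\sigma(t)$ for $\sigma$ in each piece. For $\sigma \in \D(G-e)$, Lemma \ref{lem:Adjacent} applied to $G-e$ (whose maximal vertex is still $t$) forces $\sigma^{-1}(t)$ to be a neighbor of $t$ in $G-e$, which cannot be $s$; equivalently $\sigma(s) \neq t$ (this is vacuous if $t$ becomes isolated in $G-e$, since then $\D(G-e) = \emptyset$). For $\sigma = w_{st}$ with $w \in \D(G/e)$, the construction inserts $st$ consecutively into a cycle of $w$, so $\sigma(s) = t$, while $\sigma(t)$ is the $w$-successor of $x$, which lies in $V(G-[e])$ and is therefore not $s$. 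For $\sigma = w \sqcup (st)$ with $w \in \D(G-[e])$, the transposition $(st)$ appears as a $2$-cycle, giving $\sigma(s) = t$ and $\sigma(t) = s$. So the first set is separated from the other two by $\sigma(s) \neq t$ versus $\sigma(s) = t$, and the second and third are separated by $\sigma(t) \neq s$ versus $\sigma(t) = s$.

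Once disjointness and injectivity are in hand, I can conclude $|\D(G)| = |\D(G-e)| + |\D(G/e)| + |\D(G-[e])|$, which by induction equals $\bn(G-e) + \bn(G/e) + \bn(G-[e])$, and this in turn equals $\bn(G)$ by Theorem \ref{thm:rt}. The main obstacle is the disjointness verification, and the hard part there is recognizing that the right invariants to inspect are simply the values of $\sigma$ at $s$ and $t$; once one looks at these, the three constructions immediately read off different behaviors and the rest of the argument is purely bookkeeping.
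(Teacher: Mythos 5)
Your proposal is correct and follows essentially the same route as the paper: both arguments reduce to showing the three sets in the recursion \eqref{eq:recursive D} are pairwise disjoint by inspecting $\sigma(s)$ and $\sigma(t)$, using Lemma \ref{lem:Adjacent} applied to $G-e$ to separate the first set from the other two, and then matching the recurrence of Theorem \ref{thm:rt}. Your explicit remark that the maps $w \mapsto w_{st}$ and $w \mapsto w \sqcup (st)$ are injective is a small point the paper leaves implicit, but otherwise the two proofs coincide.
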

\begin{proof}
The two numbers $|\D(G)|$ and $\bn(G)$ agree at the initial conditions in the recursive definition for $\D(G)$. The result follows if we can show that they also satisfy the same recurrence relation. We prove this by showing that the three sets on the right-hand side of equation~\eqref{eq:recursive D} are disjoint.

A derangement $v_{st}$ in the second set satisfies $v_{st}(s) = t$ and $v_{st}(t) \neq s$, while a derangement $u\sqcup (st)$ in the third set satisfies $(u\sqcup (st))(s) = t$ and $(u\sqcup (st))(t) = s$. Therefore the second and third sets are disjoint. 
Since $t$ is the maximal vertex in $G-e$ and $s$ is not adjacent to $t$ in $G-e$, Lemma \ref{lem:Adjacent} implies that $w(s) \neq t$ for $w \in \D(G-e)$. Consequently the first set is disjoint from the other two.
\end{proof}

We now present an example of an ordered graph $G$ and show how to compute $\D(G)$.  In practice, contracting two vertices $s$ and $t$ acts in the linear ordering as the absorption of the larger vertex into the smaller vertex.  It is convenient to name the new vertex by the concatenation ``$st$'' of the names of the previous two.  The position of such a vertex in the linear order of the new graph is dictated by the first letter in the concatenation.  This convention is used throughout the following example. 

\begin{example}
Let $G$ be the graph given below, with vertices ordered by their labels.
\begin{center}
\begin{tikzpicture}
\draw (0,1) -- (1.25,0); \draw (0,1) -- (1.25,2); \draw (1.25,0) -- (2.5,1); \draw[ultra thick] (1.25,2) -- (2.5,1); \draw (1.25,0) -- (1.25, 2);
\fill[black] (0,1) circle (2pt) node[left] {3}; \fill[black] (2.5,1) circle (2pt) node[right] {4};
\fill[black] (1.25,0) circle (2pt) node[below] {1}; \fill[black] (1.25,2) circle (2pt) node[above] {2};
\end{tikzpicture}
\end{center}
Throughout this example, the maximal edge used in the recursion will be drawn more thickly than other edges.  Let $e$ be the maximal edge in the initial graph.  The graphs $G-e$, $G/e$, and $G-[e]$ are given below, along with their accompanying labelings.
\begin{center}
\begin{tikzpicture}
\draw (0,1) -- (1.25,0); \draw (0,1) -- (1.25,2); \draw[ultra thick] (1.25,0) -- (2.5,1); \draw (1.25,0) -- (1.25, 2);
\fill[black] (0,1) circle (2pt) node[left] {3}; \fill[black] (2.5,1) circle (2pt) node[right] {4};
\fill[black] (1.25,0) circle (2pt) node[below] {1}; \fill[black] (1.25,2) circle (2pt) node[above] {2};
\draw (1.25,-1) node {$G-e$};
\end{tikzpicture}
\hspace{.5in}
\begin{tikzpicture}
\draw (0,1) -- (1.25,0); \draw[ultra thick] (0,1) -- (1.25,2); \draw (1.25,0) -- (1.25, 2);
\fill[black] (0,1) circle (2pt) node[left] {3};
\fill[black] (1.25,0) circle (2pt) node[below] {1}; \fill[black] (1.25,2) circle (2pt) node[above] {24};
\draw (1.25,-1) node {$G/e$};
\end{tikzpicture}
\hspace{.5in}
\begin{tikzpicture}
\draw[ultra thick] (0,1) -- (1.25,0);
\fill[black] (0,1) circle (2pt) node[left] {3};
\fill[black] (1.25,0) circle (2pt) node[below] {1};
\draw (1.25,-1) node {$G-[e]$};
\end{tikzpicture}
\end{center}
In the third of these graphs, $G-[e]$, the only operation not leading to the empty set is extraction, with gives the cycle $(13)$.  Thus, combining this with the original extraction operation, this contributes the derangement $(13)(24)$ to $\D(G)$.  In the second graph, $G/e$, extraction of the marked edge $e'$ yields a contribution of $\emptyset$.  Thus we need only consider deletion and simple contraction of this edge, as follows.
\begin{center}
\begin{tikzpicture}
\draw[ultra thick] (0,1) -- (1.25,0); \draw (1.25,0) -- (1.25, 2);
\fill[black] (0,1) circle (2pt) node[left] {3};
\fill[black] (1.25,0) circle (2pt) node[below] {1}; \fill[black] (1.25,2) circle (2pt) node[above] {24};
\draw (1.25,-1) node {$(G/e)-e'$};
\end{tikzpicture}
\hspace{.5in}
\begin{tikzpicture}
\draw[ultra thick] (1.25,0) -- (1.25, 2);
\fill[black] (1.25,0) circle (2pt) node[below] {1}; \fill[black] (1.25,2) circle (2pt) node[above] {243};
\draw (1.25,-1) node {$(G/e)/e'$};
\end{tikzpicture}
\end{center}
Iterations of this process shows that these two graphs and labelings contribute the derangements $(1324)$ and $(1243)$, respectively.  Finally, the graph $G-e$ yields the derangements $(14)(23)$ and $(1423)$.

Therefore, in this example, $\D(G) = \{(14)(23), (1423), (1324), (1243), (13)(24)\}$.
\end{example}

There is an explicit criterion, given in Theorem \ref{thm:criterion} below, for deciding whether a given derangement $w$ belongs to $\D(G)$ based on connectivity properties of the ordered graph $G$. To describe this criterion we must introduce the following notions.

\begin{defn} \label{def:Canopy}
Let $G$ be an ordered graph and let $w$ be a permutation of its vertex set. For a vertex $t$ in $G$ set
\[ \rho_w(t) = \{t, w(t), \cdots, w^{k-1}(t) \} \, , \]
where $k$ is the smallest positive integer such that $ w^k(t) \leq t$, and set
\[  \lambda_w(t) = w^{-\ell}(t) \, , \]
where $\ell$ is the smallest positive integer such that $w^{-\ell}(t) \leq t$.
\end{defn}
When $w$ is written in standard cycle form and $t$ is not the smallest element in its cycle, $\lambda_w(t)$ is the first element appearing to the left of $t$ that is smaller than $t$, and $\rho_w(t)$ is the set of elements obtained by starting at $t$ and moving to the right until reaching an element less than $t$. When $t$ is the smallest element in its cycle, $\rho_w(t)$ is the entire set of elements in the cycle of $t$, and $\lambda_w(t) = t$.  This characterizes the smallest element, as noted in the following lemma.

\begin{lem}\label{lem:smallest element}
Given a derangement $w$ of an ordered set $V$, written in standard cycle form, and an element $t \in V$, $\lambda_w(t) \in \rho_w(t)$ if and only if $t$ is the smallest element in its cycle.
\end{lem}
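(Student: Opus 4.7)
The plan is to reduce the biconditional to a simpler equality. First I would observe that by the minimality of the integer $k$ in Definition~\ref{def:Canopy}, every element of $\rho_w(t) = \{t, w(t), \ldots, w^{k-1}(t)\}$ is either $t$ itself or strictly greater than $t$: indeed, if some $w^j(t)$ with $1 \le j \le k-1$ satisfied $w^j(t) \le t$, this would contradict the minimality of $k$. Since $\lambda_w(t) \le t$ by the defining condition on $\ell$, this means $\lambda_w(t) \in \rho_w(t)$ if and only if $\lambda_w(t) = t$. So it suffices to prove that $\lambda_w(t) = t$ exactly when $t$ is the minimum of its cycle.

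Let $c$ be the length of the cycle of $t$ under $w$. Suppose $t$ is the smallest element of its cycle. Then $w^{-j}(t)$ for $1 \le j \le c-1$ runs through the remaining elements of the cycle, each of which is strictly larger than $t$; the minimum $\ell \ge 1$ with $w^{-\ell}(t) \le t$ is therefore $\ell = c$, and $\lambda_w(t) = w^{-c}(t) = t$. Conversely, if $t$ is not the smallest element of its cycle, then the cycle contains some $m < t$, and $m = w^{-j}(t)$ for some $1 \le j \le c-1$. Hence $\ell \le j < c$, and $\lambda_w(t) = w^{-\ell}(t)$ is a point in the cycle distinct from $t$, so $\lambda_w(t) \neq t$.

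Combining these two cases with the initial reduction gives the biconditional. Since the argument is essentially immediate from the definitions, I do not expect a serious obstacle; the main care needed is to keep the two minimality conditions (the one defining $k$ and the one defining $\ell$) straight and to use the fact that $w$ acts as a single cycle on the elements in question, so that backward iterates of $t$ exhaust the cycle before returning.
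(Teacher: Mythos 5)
Your proof is correct, and it matches the paper's reasoning: the paper states this lemma without proof, treating it as immediate from the preceding discussion that elements of $\rho_w(t)$ other than $t$ exceed $t$ while $\lambda_w(t)\le t$, and that $\lambda_w(t)=t$ exactly when $t$ is minimal in its cycle. Your write-up simply makes those observations precise, so there is nothing to flag.
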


\begin{example}\label{ex:canopy}
Let $w = (13472)(56)$.  Then $\lambda_w$ and $\rho_w$ are given below.
$$\begin{array}{c|c|l}
t & \lambda_w(t) & \rho_w(t)\\
\hline
1 & 1 & \{1,2,3,4,7\}\\
2 & 1 & \{2\}\\
3 & 1 & \{3,4,7\}\\
4 & 3 & \{4,7\}\\
5 & 5 & \{5,6\}\\
6 & 5 & \{6\}\\
7 & 4 & \{7\}
\end{array}$$
\end{example}

We now state and prove the criterion for membership in the set $\D(G)$.  Example~\ref{ex:criterion} depicts an instance of when this criterion is met and when it is not.

\begin{thm} \label{thm:criterion}
Let $G$ be a finite simple ordered graph and let $w$ be a permutation of its vertex set. Then $w \in \D(G)$ if and only if for every vertex $t$ of $G$  
the vertex $\lambda_w(t)$ is adjacent to some vertex in $\rho_w(t)$.
\end{thm}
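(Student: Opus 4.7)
The plan is to prove the theorem by induction on $|V(G)|+|E(G)|$, aligning the induction with the recursive construction of $\D(G)$. Write $\mathcal{C}(G)$ for the set of permutations of $V(G)$ satisfying the adjacency criterion, so the theorem asserts $\D(G)=\mathcal{C}(G)$. As a preliminary observation, $\mathcal{C}(G)$ contains only derangements, because a fixed point $v$ would force $\lambda_w(v)=v$ and $\rho_w(v)=\{v\}$, but $v\not\sim v$. The base cases, $t$ isolated and $|V(G)|=2$, are immediate: if $t$ is isolated then both $\D(G)$ and $\mathcal{C}(G)$ are empty (no neighbor of $t$ can witness the criterion at $t$), and when $|V(G)|=2$ with an edge the unique derangement $(st)$ satisfies the criterion iff $s\sim t$.

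For the inductive step, let $t$ be the maximal vertex, $s$ the maximal vertex adjacent to $t$, and $e=\{s,t\}$. Partition the derangements of $V(G)$ by their behavior at $\{s,t\}$ into three classes: (I) $w(s)\neq t$, (II) $(st)$ is a 2-cycle of $w$, and (III) $w(s)=t$ but $w(t)\neq s$. The recursion \eqref{eq:recursive D} expresses $\D(G)$ as a disjoint union with its three pieces lying in (I), (II), (III) respectively. The plan is to prove three matching subclaims, one for each class, identifying $\mathcal{C}(G)$ restricted to that class with $\mathcal{C}$ of the appropriate smaller graph; combined with the inductive hypothesis this yields $\mathcal{C}(G)=\D(G)$.

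Subclaim (II), for extraction, is the most straightforward: writing $w=w'\sqcup(st)$, the transposition $(st)$ does not interact with $V(G-[e])$, so $\lambda_w$ and $\rho_w$ restricted to $V(G-[e])$ agree with $\lambda_{w'}, \rho_{w'}$, while at $s$ and $t$ the criteria reduce to $s\sim t$. Subclaim (I), for deletion, amounts to showing that in class (I) no vertex's criterion in $G$ can be witnessed solely by the edge $\{s,t\}$. The symmetric case $\lambda_w(v)=t$ is ruled out because $t$ is maximal (forcing $v=t$ and then $t$ smallest in its cycle, contradicting the existence of anything smaller in the cycle). The remaining scenario $\lambda_w(v)=s$ with $t\in\rho_w(v)$ is blocked as follows: the criterion at $t$ in class (I) together with the maximality of $s$ among neighbors of $t$ forces $w^{-1}(t)<s$, whereas $t\in\rho_w(v)$ places $w^{-1}(t)$ either at $v$ (when $t=w(v)$) or at another element of $\rho_w(v)$ larger than $v$, in either case giving $w^{-1}(t)\geq s$, a contradiction.

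The main obstacle is Subclaim (III), corresponding to contraction. Writing $w=w''_{st}$ where $w''$ is a derangement of $V(G/e)$ with $x$ appearing in a cycle $(c_0,\ldots,c_{i-1},x,c_{i+1},\ldots,c_m)$ in standard form, I would compare $\lambda_w, \rho_w$ to $\lambda_{w''}, \rho_{w''}$ vertex by vertex, noting that the substitution of $x$ by the pair $(s,t)$ only affects the traversal when $x$ is encountered. For $v$ outside the cycle of $x$, everything agrees trivially. For $v$ inside this cycle with $v\neq x$, the only delicate case is when $\lambda_{w''}(v)=x$: here $\lambda_{w''}(v)=x$ forces $v>s$, so every element of $\rho_{w''}(v)$ is $\geq v>s$ and, by maximality of $s$, is not adjacent to $t$. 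Thus ``adjacency to $x$ in $G/e$'' reduces to ``adjacency to $s$ in $G$,'' and the criteria transfer. The remaining piece is reconciling the criterion at $x$ in $G/e$ with the behavior in $G$: when $x$ is not smallest in its cycle (Case 2b, $i\geq 1$), the same $x\leadsto(s,t)$ analysis shows this criterion is directly equivalent to the criterion at $s$ in $G$; when $x$ is smallest (Case 2a, $i=0$), the criterion at $s$ in $G$ holds trivially via $s\sim t$ and supplies no information, but examining $c_{j_0}$, the smallest of $c_1,\ldots,c_m$, one computes $\lambda_w(c_{j_0})=s$ and $\rho_w(c_{j_0})=\{c_{j_0},\ldots,c_m\}$, so the criterion at $c_{j_0}$ in $G$ (which is equivalent to the criterion at $c_{j_0}$ for $w''$ in $G/e$) directly supplies an adjacency $s\sim c_\ell$, which is precisely the criterion at $x$ in $G/e$. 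Combining these equivalences yields $w''\in\mathcal{C}(G/e)\Leftrightarrow w''_{st}\in\mathcal{C}(G)$, completing the induction.
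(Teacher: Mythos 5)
Your proposal is correct and follows essentially the same route as the paper: the paper also characterizes the criterion-satisfying permutations as a set $\D'(G)$, verifies the same initial conditions, and shows $\D'$ obeys the recursion \eqref{eq:recursive D} via the identical three-way split on the behavior of $w$ at $\{s,t\}$, with the same maximality arguments for the deletion and contraction cases. The only cosmetic difference is in the last contraction subcase ($x$ minimal in its cycle), where you extract the needed adjacency from the criterion at the smallest element $c_{j_0}$ of the rest of the cycle, while the paper uses the vertex $w(t)$; both work.
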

\begin{proof}
For a finite simple ordered graph $G$ and a permutation $w$ of its vertex set, we will say that $w$ is $G$-valid at a vertex $r$ of $G$ if $\lambda_w(r)$ is adjacent to some vertex in $\rho_w(r)$. We say that $w$ is $G$-valid if it is $G$-valid at $r$ for every vertex $r$ of $G$. Let $\D'(G)$ denote the set of $G$-valid permutations $V(G)$. We will show that $\D'(-)$ satisfies the same recursion and the same initial conditions as $\D(-)$, and hence $\D'(G) = \D(G)$.

First let us observe that the conclusion of Lemma \ref{lem:Adjacent} holds for $\D'(G)$: If $w$ is a permutation of $V(G)$ with $w(r) = r$ for some vertex $r$ then $\lambda_w(r) =r $ and $\rho_w(r) = \{r\}$ so $w$ is not $G$-valid at $r$ (since $G$ is assumed to be simple). Thus any element of $\D'(G)$ is a derangement. Furthermore, if $t$ is the maximal vertex of $G$ and $w \in \D'(G)$ then $\lambda_w(t) = w^{-1}(t)$ and $\rho_w(t) = \{t\}$, so $G$-validity at $t$ implies that $t$ is adjacent to $ w^{-1}(t)$ in $G$.

Now we establish the initial conditions. If $G$ is an ordered graph with vertices $s < t$ and an edge between them, then one easily checks that $\D'(G) =  \{(st)\} = \D(G)$. If the maximal vertex $t$ of $G$ is isolated then no derangement $w$ of $V(G)$ can be $G$-valid at $t$ since always $\rho_w(t) = \{t\}$. Hence $\D'(G) =\emptyset = \D(G)$. 

For the recursive step let $G$ be an ordered graph with at least two edges such that the maximal vertex $t$ is not isolated and let $e = \{s,t\}$ be the maximal edge in $G$. We can write $\D'(G)$ as a disjoint union
\[ \D'(G) = \A(G) \sqcup \B(G) \sqcup \C(G) \, ,\]
where 
\begin{align*}
  \A(G) &= \{ w \in \D'(G) : w(s) \neq t \} \, , \\
  \B(G) &= \{ w \in \D'(G) : w(s) = t, w(t) \neq s \} \, , \text{ and} \\
  \C(G) &= \{ w \in \D'(G) : w(s) = t, w(t) = s \} \, .
\end{align*}
The proof is complete once we establish the following equalities.
\begin{align}
  \A(G) &= \D'(G-e), \label{eq:DG1}\tag{i}\\
  \B(G) &= \{ u_{st} : u \in \D'(G/e) \} \tag{ii} \, , \text{and} \label{eq:DG2}\\
  \C(G) &= \{ v\sqcup (st) : v \in \D'(G-[e]) \} \tag{iii} \, .\label{eq:DG3}
\end{align}

We start by proving \eqref{eq:DG1}. Any $(G-e)$-valid derangement of $V(G-e) = V(G)$ is also $G$-valid, so $\D'(G-e) \subseteq \D'(G)$. Furthermore, for $w \in \D'(G-e)$ we know that $t$ is adjacent to $w^{-1}(t)$ in $G-e$, so $s \neq w^{-1}(t)$. Combining these facts we have $\D'(G-e) \subseteq \A(G)$. 

Now suppose $w \in \A(G)$. If $w \notin \D'(G-e)$ then there is some vertex $r$ at which $w$ is not $(G-e)$-valid. By assumption $w$ is $G$-valid at $r$, so we must have  $\lambda_w(r) = s$ and $t \in \rho_w(r)$. In particular, $s$ is in the same $w$-cycle as $t$ and appears left of $t$ when $w$ is written in standard cycle form. However, we know that $t$ is adjacent to $w^{-1}(t)$, and maximality of $e = \{s,t\}$ and the assumption $w(s) \neq t$ then imply $w^{-1}(t) < s \leq r$. Since $w^{-1}(t)$ appears between $s$ and $t$ when $w$ is written in standard cycle form, this makes the conditions $\lambda_w(r) = s$ and $t \in \rho_w(r)$ incompatible, leading to a contradiction. We conclude that $w \in \D'(G)$, and since this holds for all $w \in \A(G)$ we have $\A(G) \subseteq \D'(G-e)$, completing the proof of \eqref{eq:DG1}.

To prove \eqref{eq:DG3} we first observe that any derangement $w$ of $V(G)$ such that $w(s) = t$ and $w(t) = s$ can be written as $w = v \sqcup (st)$ where $v$ is a derangement of $V(G-[e])$. Clearly $w \in \C(G)$ if and only if $v \in \D'(G-[e])$.

It remains to prove \eqref{eq:DG2}. Any derangement $w$ of $V(G)$ such that $w(s) = t$ and $w(t) \neq s$ can be written as $w =u_{st}$ where $u$ is a derangement of $V(G/e)$. We show that $w$ is $G$-valid if and only if $u$ is $G/e$-valid. First observe that $w$ is always $G$-valid at $t$ since $\lambda_w(t) =s$ and $\rho_w(t) = \{t\}$. Now let $x$ denote the vertex in $G/e$ obtained upon identifying $s$ with $t$. Since the ordering of $G/e$ is obtained by letting $x$ take the place of $s$ in the ordering of $G$ we have an injective, order-preserving map $i \colon V(G/e) \to V(G)$ that sends $x$ to $s$ and any other vertex to itself. For each $r$ in $V(G/e)$ we will show that $G/e$-validity of $u$ at $r$ implies $G$-validity of $w$ at $i(r)$ and that $G$-validity of $w$ (at every vertex) implies $G/e$-validity of $u$ at $r$. Since $i$ has image $V(G)\setminus \{t\}$, and we already know that $w$ is $G$-valid at $t$, it follows that $w$ is $G$-valid if and only if $u$ is $G/e$-valid, completing the proof. To prove the claim one checks that for $r \in V(G/e)$ one has 
\[ \lambda_w(i(r)) = i(\lambda_u(r)) \]
and
\[ \rho_w(i(r)) = \begin{cases} 
                \rho_u(r), &\text{if $x\notin \rho_u(r)$}\, ;\\
                \left(\rho_u(r)\setminus \{x\}\right) \cup \{s,t\}, &\text{if $x\in \rho_u(r)$}\, .
                \end{cases} \]
We consider four cases.

If $\lambda_u(r) \neq x$ and $x \notin \rho_u(r)$ then $\lambda_w(i(r)) = \lambda_u(r)$ and $\rho_w(i(r)) = \rho_u(r)$ so the result is obvious.

If $\lambda_u(r) \neq x$ and $x \in \rho_u(r)$ then $\lambda_w(i(r)) = \lambda_u(r)$ and $\rho_w(i(r)) = \left(\rho_u(r)\setminus \{x\}\right) \cup \{s,t\}$. The result now follows from the fact that $\lambda_u(r)$ is adjacent to $x$ in $G/e$ if and only if it is adjacent to either $s$ or $t$ in $G$.

If $\lambda_u(r) = x$ and $x \notin \rho_u(r)$ then $\lambda_w(i(r)) =s $ and $\rho_w(i(r)) = \rho_u(r)$. Furthermore, since $\lambda_u(r) \notin \rho_u(r)$ we know that $r$ is not minimal in its $u$-cycle by Lemma~\ref{lem:smallest element}. It follows that $i(r)$ is not minimal in its $w$-cycle; in particular $i(r) > \lambda_w(i(r)) =s$. For any $q \in \rho_w(i(r))$ we have $q \geq i(r)> s$ so maximality of $e = \{s,t\}$ implies that $q$ is not adjacent to $t$. Thus we can deduce that $q$ is adjacent to $x$ in $G/e$ if and only if $q$ is adjacent to $s$ in $G$. It follows that $u$ is $G/e$-valid at $r$ if and only if $w$ is $G$-valid at $i(r)$. 

If $\lambda_u(r) = x$ and $x \in \rho_u(r)$ then $r =x$ and $x$ is minimal in its $u$-cycle by Lemma~\ref{lem:smallest element}. It follows that $i(r) = s$ is minimal in its $w$-cycle. In this case $w$ is automatically $G$-valid at $i(r)$ because $s = \lambda_w(i(r))$ is adjacent to $t \in \rho_w(i(r))$. On the other hand, if we assume that $w$ is $G$-valid then $G$-validity at $w(t)$ implies that $s = \lambda_w(w(t))$ is adjacent to a vertex of $\rho_w(w(t))$ in $G$, and hence $r = x$ is adjacent to the same vertex (which also belongs to $\rho_u(r)$ since this set is the entire $u$-cycle of $r$), proving $G/e$-validity of $u$ at $x$.
\end{proof}

\begin{example}\label{ex:criterion}
Let $G$ be the 7-vertex graph depicted below.
\begin{center}
\begin{tikzpicture}
\draw (1,0) -- (4,0);
\draw (3,0) -- (3,1);
\draw (3,1) -- (5,1);
\foreach \x in {1,2,3,4} {\fill[black] (\x, 0) circle (2pt); \draw (\x,0) node[below] {$\x$};}
\foreach \x in {3,4,5} {\fill[black] (\x, 1) circle (2pt);}
\fill[black] (3,1) circle (2pt);
\draw (3,1) node[above] {$5$};
\draw (4,1) node[above] {$6$};
\draw (5,1) node[above] {$7$};
\end{tikzpicture}
\end{center}
It is easy to check that $(1234)(567)$ is an element of $\D(G)$.  On the other hand, the derangement $(1234567)$ is excluded from $\D(G)$ because $\lambda_{(1234567)}(5) = 4$ and $\rho_{(1234567)}(5) = \{5,6,7\}$, but $4$ is not adjacent to $5$, $6$, or $7$ in the graph.  As another example, the derangement $(13472)(56)$ of Example~\ref{ex:canopy} is excluded from $\D(G)$ because $\lambda_{(13472)(56)}(3) = 1$ and $\rho_{(13472)(56)}(3) = \{3,4,7\}$, and $1$ is not adjacent to $3$, $4$, or $7$.
\end{example}

Theorem \ref{thm:criterion} allows us to describe the derangements induced by the disjoint sum of two ordered graphs in terms of the derangements induced by the parts. In the statement we use the notation $w_1 \sqcup w_2$ to denote the permutation of $V_1 \sqcup V_2$ obtained from a permutation $w_1$ of a set $V_1$ and a permutation $w_2$ of a set $V_2$. 

\begin{cor}
If $G_1$ and $G_2$ are disjoint finite ordered simple graphs then 
\[ \D(G_1 \sqcup G_2) = \{ w_1 \sqcup w_2 : w_1 \in \D(G_1), w_2 \in \D(G_2)  \} \, .\]
\end{cor}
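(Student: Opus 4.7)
The plan is to combine Theorem \ref{thm:criterion} with a cardinality count coming from Propositions \ref{prop:disjoint union} and \ref{prop:distinct derangements}. I would prove the inclusion $\supseteq$ directly from the criterion, and then deduce equality by matching cardinalities. This side-steps the more delicate task of arguing directly that a derangement in $\D(G_1\sqcup G_2)$ cannot have a cycle straddling both vertex sets.

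First I would verify that $\{w_1\sqcup w_2 : w_i\in\D(G_i)\}\subseteq \D(G_1\sqcup G_2)$. Given $w=w_1\sqcup w_2$ with $w_i\in\D(G_i)$, for any vertex $t\in V(G_i)$ the $w$-cycle of $t$ coincides with the $w_i$-cycle of $t$ and in particular is contained in $V(G_i)$. Because the ordering on $V(G_1\sqcup G_2)$ restricts to the original ordering on $V(G_i)$, this identification implies $\rho_w(t)=\rho_{w_i}(t)$ and $\lambda_w(t)=\lambda_{w_i}(t)$. Applying Theorem \ref{thm:criterion} to $w_i$ in $G_i$ produces a vertex of $\rho_{w_i}(t)$ adjacent in $G_i$ to $\lambda_{w_i}(t)$, and this adjacency persists in $G_1\sqcup G_2$. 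Running over all $t$ and invoking Theorem \ref{thm:criterion} in the other direction gives $w\in\D(G_1\sqcup G_2)$.

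To conclude, I would observe that distinct pairs $(w_1,w_2)$ clearly produce distinct disjoint unions, so the right-hand side has cardinality $|\D(G_1)|\cdot|\D(G_2)|$. Combining Proposition \ref{prop:distinct derangements} with Proposition \ref{prop:disjoint union} yields
\[ |\D(G_1\sqcup G_2)| \;=\; \bn(G_1\sqcup G_2) \;=\; \bn(G_1)\,\bn(G_2) \;=\; |\D(G_1)|\cdot|\D(G_2)|, \]
so the two finite sets have the same cardinality, and the one-sided inclusion forces equality.

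The only point requiring care, and what I expect to be the subtlest step to articulate, is the identification $\rho_w(t)=\rho_{w_i}(t)$ and $\lambda_w(t)=\lambda_{w_i}(t)$: the comparisons ``$w^k(t)\le t$'' used in Definition \ref{def:Canopy} are a priori made in the global order on $V(G_1\sqcup G_2)$, and one needs to confirm that they agree with the corresponding comparisons inside $V(G_i)$. This is exactly why the argument needs the fact that cycles of $w$ do not cross component boundaries, together with the standing assumption that the order on $V(G_1\sqcup G_2)$ is a linear extension of the component orders.
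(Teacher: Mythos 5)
Your proof is correct, but it takes a different route from the one the paper intends. The paper presents this corollary as a direct consequence of Theorem \ref{thm:criterion}: one checks \emph{both} inclusions by validity, the nontrivial half being that a $(G_1\sqcup G_2)$-valid derangement cannot have a cycle meeting both $V(G_1)$ and $V(G_2)$ (for instance, validity at the maximal element of such a cycle, and then at suitable subsequent elements, forces adjacencies across components that do not exist). You instead verify only the inclusion $\supseteq$ via the criterion --- and your treatment of that half is careful and complete, including the key point that $\rho_w(t)=\rho_{w_i}(t)$ and $\lambda_w(t)=\lambda_{w_i}(t)$ because the comparisons in Definition \ref{def:Canopy} stay inside $V(G_i)$ and the global order restricts to the component orders --- and then you close the gap by counting, using $|\D(G)|=\bn(G)$ (Proposition \ref{prop:distinct derangements}) together with $\bn(G_1\sqcup G_2)=\bn(G_1)\bn(G_2)$ (Proposition \ref{prop:disjoint union}) and the injectivity of $(w_1,w_2)\mapsto w_1\sqcup w_2$. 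What this buys you is the avoidance of the fiddly case analysis ruling out straddling cycles; what it costs is structural insight (your argument never explains \emph{why} cycles respect components) and self-containedness, since the multiplicativity of $\bn$ ultimately rests on the join decomposition $\Delta(H_1\sqcup H_2)=\Delta(H_1)*\Delta(H_2)$, which is heavier input than the purely combinatorial criterion. Two small points worth making explicit if you write this up: the counting step needs both $G_1$ and $G_2$ nonempty for Proposition \ref{prop:distinct derangements} to apply, and the whole statement presupposes (as you correctly flag) that the linear order on $V(G_1\sqcup G_2)$ is an extension of the orders on $V(G_1)$ and $V(G_2)$, a hypothesis the paper leaves implicit.
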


The set $\D(G)$ invites a variety of combinatorial questions.  For example, we can look at those elements in $\D(G)$ whose standard cycle form consists of exactly one cycle.  Let us call this set $\D^1(G) \subseteq \D(G)$.  Note that when extraction is used in the recursive construction of elements of $\D(G)$, additional cycles are created.  Thus $\D^1(G)$ satisfies the recursion
\begin{equation}\label{eq:recursive D^1}
\D^1(G) = \D^1(G - e) \cup \{ w_{st} : w \in \D^1(G/e) \}.
\end{equation}
Some enumerative results about $\D^1(G)$ are stated in the following proposition.

\begin{prop}\label{prop:number with one cycle}
Let $G$ be a finite simple ordered graph.
\begin{enumerate}[(a)]
\item $|\D^1(G)| = 0$ if and only if $G$ is not connected.
\item $|\D^1(G)| = 1$ if and only if $G$ is a tree.
\item If $G$ has exactly one cycle, and that cycle contains $m$ vertices, then $|\D^1(G)| = m-1$.
\end{enumerate}
\end{prop}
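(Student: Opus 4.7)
The plan is to prove all three parts simultaneously by strong induction on $|V(G)|+|E(G)|$, exploiting the recursion \eqref{eq:recursive D^1}, which gives $f(G)=f(G-e)+f(G/e)$ for $f(G):=|\D^1(G)|$, where $e=\{s,t\}$ is the maximal edge of $G$. The base cases are that $f(K_2)=1$ and $K_2$ is a tree (so (b) holds), together with the observation that $f(G)=0$ whenever $G$ has an isolated vertex by Lemma~\ref{lem:Adjacent}, which handles the disconnected portion of (a).

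For the inductive step, assume $G$ has no isolated vertex, and note the following elementary graph-theoretic facts about the maximal edge $e$: (i) if $G$ is disconnected, both $G-e$ and $G/e$ remain disconnected, since $e$ lies within a single component; (ii) if $G$ is connected and $e$ is a bridge, then $G-e$ is disconnected while $G/e$ is connected and every cycle of $G$ survives as a cycle in $G/e$; (iii) if $G$ is connected and $e$ is not a bridge, both $G-e$ and $G/e$ remain connected. For the unicyclic case, $e$ fails to be a bridge precisely when $e$ lies on the unique cycle; in that situation $G-e$ is a tree, and $G/e$ is unicyclic of cycle length $m-1$ when $m\geq 4$ or a tree when $m=3$, the latter because in a triangle the endpoints of $e$ share the third vertex as a common neighbor, so contraction produces a multi-edge that is removed.

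Part (a) now follows directly: if $G$ is disconnected, (i) and the induction hypothesis give $f(G)=0$; if $G$ is connected, (ii) or (iii) provides a connected smaller graph, contributing at least $1$ by the induction hypothesis. For the ($\Leftarrow$) direction of (b), every edge in a tree is a bridge, so $f(G-e)=0$ by (a) and $f(G/e)=1$ by the induction hypothesis applied to the smaller tree $G/e$. The main obstacle is the ($\Rightarrow$) direction of (b), where the recursion alone does not separate $f(G)=1$ from $f(G)\geq 2$; to handle it I would establish, by a parallel induction, the auxiliary claim that any connected graph containing at least one cycle has $f(G)\geq 2$. When $e$ is a bridge, (ii) makes $G/e$ connected with a cycle, so the auxiliary hypothesis yields $f(G/e)\geq 2$ and hence $f(G)\geq 2$; when $e$ is not a bridge, both $G-e$ and $G/e$ are connected, so by (a) each contributes at least $1$.

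For part (c), split on whether the maximal edge $e$ lies on the unique cycle of $G$. If $e$ is a bridge off the cycle, then $f(G-e)=0$ by (a) and $G/e$ is unicyclic with cycle length still $m$, so the induction hypothesis for (c) gives $f(G)=0+(m-1)=m-1$. If $e$ lies on the cycle, then $G-e$ is a tree with $f(G-e)=1$ by (b), and $G/e$ is either unicyclic of cycle length $m-1$ or a tree; in either case the induction hypothesis gives $f(G/e)=m-2$, yielding $f(G)=1+(m-2)=m-1$, as required.
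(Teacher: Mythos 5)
Your proof is correct and follows essentially the same route as the paper, which merely asserts that (a) and (b) ``follow easily'' from the recursion $|\D^1(G)| = |\D^1(G-e)| + |\D^1(G/e)|$ and that (c) follows by induction on the number of edges using that recursion and the earlier parts. You have simply supplied the details the paper omits --- in particular the case analysis on whether the maximal edge is a bridge and the auxiliary claim that a connected graph with a cycle has $|\D^1(G)| \ge 2$ --- and these details check out.
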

\begin{proof}
Statements (a) and (b) follow easily from equation~\eqref{eq:recursive D^1}.  Statement (c) can be proved by induction on the total number of edges in $G$, using the recursion of equation~\eqref{eq:recursive D^1} and the previous statements. 
\end{proof}

It is interesting to note that the results of Proposition~\ref{prop:number with one cycle} are independent of the ordering of the vertex set of $G$. More generally, one can let $\D^k(G) \subseteq \D(G)$ be the subset of derangements with exactly $k$ disjoint cycles. Examples suggest that the sizes of the sets $\D^k(G)$ are independent of the ordering of the vertex set of $G$. In later sections we will show that the elements in $\D(G)$ can be treated as combinatorial representatives for the spheres appearing in the wedge sum description of the homotopy type of $\Delta(G)$ offered by Theorem \ref{thm:rt}. This raises the question of the significance of the number of cycles in $w \in \D(G)$ with respect to the boolean complex $\Delta(G)$.

\section{From derangements to homology}\label{sec:DtoH}

We now describe a map from the set of derangements of the vertex set of a finite ordered graph $G$ to the top-dimensional homology of the boolean complex of $G$. Just as in Section \ref{sec:GtoD}, we fix an ordering $\le$ on $V(G)$, and the output of the map defined here depends on that ordering. As before, we suppress the ``$\le$'' in our notation.  Formally, we are therefore defining a map
 \[ \phi_{G} \colon \D_{V(G)} \longrightarrow H_{|G|-1}(\Delta(G)). \]
When no confusion will arise, this map may be denoted simply as $\phi$.

We initially do this for complete graphs, defining a map $\phi_{K_{V}} : \D_{V} \to C_{|V|-1}(\Delta(K_{V}))$, where $K_{V}$ is the complete graph with vertex set $V$.  We then show in Lemma \ref{lem:IsCycle} that $\phi_{K_{V}}$ actually takes values in $H_{|V|-1}(\Delta(K_{V})) \subseteq C_{|V|-1}(\Delta(K_{V}))$. Finally, the map $\phi_{G}$ is obtained by composing $\phi_{K_{V(G)}}$ with the collapsing map ${\pi_G}_*$, as described in Section~\ref{subsec:Collapse}.

\begin{defn} \label{defn:LieBracket}
Let $G$ be a finite simple graph. For $a \in C_k(\Delta(G))$ and $b \in C_\ell(\Delta(G))$, set
$$\pr{a}{b} := ab+ba \in C_{k+\ell +1}(\Delta(G)),$$
where $ab$ and $ba$ are the string concatenation products (see Section~\ref{subsec:Product}). 
\end{defn}

Consider a linearly ordered set $V$. The following algorithm describes a map $\phi = \phi_{K_{V}} : \mathcal{D}_V \rightarrow C_{|V|-1}(\Delta(K_V))$.  The input is a derangement $w \in \mathcal{D}_V$, written in standard cycle form, and the final output of this procedure is $\phi(w)$.
\begin{quote}
\begin{enumerate}\renewcommand{\labelenumi}{Step \arabic{enumi}.}
\item Between each consecutive pair of letters in each cycle of $w$, insert the symbol $\star$.
\item If there are no $\star$ symbols in the string, then HALT and OUTPUT the string.  Otherwise, determine which symbol $\star$ has the largest right-hand neighbor.
\item Suppose that the symbol $\star$ located in Step 2 is between quantities $Q$ and $R$; that is, it appears as $Q \star R$.  Then replace $Q \star R$ by $\pr{Q}{R}$.
\item GOTO Step 2.
\end{enumerate}
\end{quote}

\begin{example}
Let $w = (134)(26)(587)$.  Applying the above procedure to $w$ gives the following sequence of steps.
\begin{eqnarray*}
&&(1 \star 3 \star 4)(2 \star 6)(5 \star 8 \star 7)\\
&&(1 \star 3 \star 4)(2 \star 6)(\pr{5}{8} \star 7)\\
&&(1 \star 3 \star 4)(2 \star 6)\pr{\pr{5}{8}}{7}\\
&&(1 \star 3 \star 4)\pr{2}{6}\pr{\pr{5}{8}}{7}\\
&&(1 \star \pr{3}{4})\pr{2}{6}\pr{\pr{5}{8}}{7}\\
&&\pr{1}{\pr{3}{4}}\pr{2}{6}\pr{\pr{5}{8}}{7}
\end{eqnarray*}
Thus 
\begin{eqnarray*}
\phi((134)(26)(587)) &=& \pr{1}{\pr{3}{4}}\pr{2}{6}\pr{\pr{5}{8}}{7}\\
&=&\pr{1}{(34+43)}(26+62)\pr{(58+85)}{7}\\
&=&(1(34+43) + (34+43)1)(26+62)((58+85)7 + 7(58+85))\\
&=&(134+143 + 341+431)(26+62)(587+857 + 758+785))\\
&=& 13426587 + 13426857 + 13426758 + 13426785\\
&& + 13462587 + 13462857 + 13462758 + 13462785\\
&& + 14326587 + 14326857 + 14326758 + 14326785\\
&& + 14362587 + 14362857 + 14362758 + 14362785\\
&& + 34126587 + 34126857 + 34126758 + 34126785\\
&& + 34162587 + 34162857 + 34162758 + 34162785\\
&& + 43126587 + 43126857 + 43126758 + 43126785\\
&& + 43162587 + 43162857 + 43162758 + 43162785
\end{eqnarray*}
\end{example}

The following result allows us to regard $\phi$ as a map $\D(K_V) \to H_{|V|-1}(\Delta(K))$, and we do so henceforth without further mention.

\begin{lem} \label{lem:IsCycle}
Let $V$ be finite ordered set. If $w$ is a derangement of $V$, then $\phi_{(K_V)}(w)$ is a homological cycle.
\end{lem}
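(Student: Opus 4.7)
My plan is to verify $\d\phi_{K_V}(w)=0$ by showing $\d_v\phi_{K_V}(w)=0$ for every $v\in V$, as permitted by Lemma~\ref{lem:dv}. The engine is that $\d_v$ behaves as a derivation on the string concatenation product: setting $\d_v(x)=0$ whenever $v$ does not appear in $x$, for chains $a$ and $b$ on disjoint letter sets one has $\d_v(ab)=\d_v(a)\,b+a\,\d_v(b)$. A short calculation yields the corresponding identity for the bracket of Definition~\ref{defn:LieBracket}: if $v$ lies in exactly one of $a,b$, then $\d_v\pr{a}{b}$ equals $\pr{\d_v a}{b}$ or $\pr{a}{\d_v b}$ accordingly, so that $\d_v$ percolates through any iterated $\pr{\cdot}{\cdot}$-expression down to the position of $v$.

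Next I would observe that $\phi_{K_V}$ factors through the cycles of $w$: because the algorithm inserts $\star$ symbols only between consecutive letters of a common cycle, no bracket operation ever joins letters from different cycles, regardless of the order in which the $\star$'s are processed. Consequently, writing $w=c_1c_2\cdots c_m$ in standard cycle form,
\[\phi_{K_V}(w)\;=\;\phi(c_1)\,\phi(c_2)\cdots\phi(c_m),\]
where each $\phi(c_i)$ is an iterated $\pr{\cdot}{\cdot}$-expression on the letters of $c_i$ alone. Since $v$ appears in exactly one $c_j$, the derivation property immediately reduces the task to proving $\d_v\phi(c_j)=0$.

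The heart of the argument is then a structural lemma: for every iterated bracket expression $E$ built from $\pr{\cdot}{\cdot}$ on pairwise distinct letters, and every letter $v$ of $E$, $\d_v E=0$. I would prove this by induction on the number of letters in $E$. The base case $E=\pr{v}{X}$ or $E=\pr{X}{v}$ (with $v\notin X$) is the direct computation $\d_v E=X+X=0$ over $\F_2$. The inductive step writes $E=\pr{A}{B}$ with $v$ appearing in, say, $A$; either $A$ is the single letter $v$ and we are in the base case, or the derivation formula from the first paragraph gives $\d_v E=\pr{\d_v A}{B}=0$ by the inductive hypothesis applied to $A$. Applying this to $E=\phi(c_j)$---which is manifestly of the required form, since every step of the algorithm replaces $Q\star R$ by $\pr{Q}{R}$ without altering anything else---completes the reduction.

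The only delicate point I anticipate is setting up the derivation identity for $\d_v$ on $\pr{a}{b}$ carefully: one must verify that the four terms appearing in $\d_v(ab+ba)$ combine, when $v$ lies in exactly one of $a$ and $b$, into the single bracket $\pr{\d_v a}{b}$ or $\pr{a}{\d_v b}$ as appropriate. This is a straightforward unwinding, but it is the one bookkeeping step on which the rest of the argument rests; once it is in hand, Lemma~\ref{lem:dv} converts the pointwise vanishing of $\d_v\phi_{K_V}(w)$ into $\d\phi_{K_V}(w)=0$, so $\phi_{K_V}(w)$ is a homological cycle.
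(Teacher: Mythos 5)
Your proposal is correct and follows essentially the same route as the paper's proof: reduce to showing $\d_v\phi(w)=0$ for each $v$ via Lemma~\ref{lem:dv}, compute $\d_v\pr{v}{x}=x+x=0$ over $\F_2$ at the bracket where $v$ first appears (which exists because $w$ has no fixed points), and propagate the vanishing outward through the remaining brackets and concatenations. You simply make explicit the derivation property and the factorization over cycles that the paper leaves as a one-line observation.
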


\begin{proof}
We must show that $\d(\phi(w))$ equals $0$.  By Lemma \ref{lem:dv} this is equivalent to showing that for each $v \in V$ we have $\d_v(\phi(w)) = 0$. Because $w$ has no fixed points, the letter $v$ is in the same cycle as at least one other letter in the standard cycle form of $w$.  Thus $\phi(w)$ contains a product $\pr{v}{x} = \pr{x}{v}$ for some $x$ not containing the letter $v$.  Observe that
$$\d_v(\pr{v}{x}) = \d_v(vx + xv) = x + x = 2x = 0.$$
This, combined with the fact that if $\d_v(x) = 0$ for some $x$ then $\d_v(\pr{x}{y}) = 0$ for all $y$, implies that $\d_v(\phi(w)) = 0$ for all $v \in V$.  Consequently, $\d(\phi(w)) = 0$, so $\phi(w)$ is a homological cycle.
\end{proof}

\begin{defn}
For a finite simple ordered graph $G$, let $\phi_{G}$ be the composite
 \[ \phi_{G} \colon \D(G) \hookrightarrow \D(V(G)) \xrightarrow{\phi_{(K_{V(G)})} } H_{|G|-1}(\Delta(K_{V(G)})) \xrightarrow{{\pi_G}_*} H_{|G|-1}(\Delta(G)).  \]
\end{defn}

\section{The derangement basis}\label{sec:MainThm}

Having set up the maps from graphs to derangements and from derangements to homology in the previous sections, we are now prepared to prove the main result of this paper.  This states that given a finite simple ordered graph $G$, the homological cycles $\{\phi(w) : w \in \D(G)\}$ form a basis for the homology of $\Delta(G)$.  Looking ahead to this result, we make the following definition.

\begin{defn}
Given a finite simple ordered graph $G$, the set $\{\phi(w) : w \in \D(G)\}$ is the \emph{derangement basis} associated to $G$.
\end{defn}

The definition is justified by the following result.

\begin{thm}\label{thm:main}
Given a finite simple ordered graph $G$, the derangement basis associated to $G$ is a basis for $H_{|G|-1}(\Delta(G))$. 
\end{thm}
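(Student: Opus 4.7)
The plan is to prove this theorem by induction on the number of edges of $G$, relying on the equality $|\D(G)| = \bn(G) = \dim_{\F_2} H_{|G|-1}(\Delta(G))$ from Theorem~\ref{thm:rt} and Proposition~\ref{prop:distinct derangements}. Because of this equality, it suffices to establish linear independence of $\{\phi_G(w) : w \in \D(G)\}$, since we are working in top dimension where $H_{|G|-1}(\Delta(G)) = \ker\d \subseteq C_{|G|-1}(\Delta(G))$. The base cases are immediate: when $G$ has no edges, $\D(G) = \emptyset$ and the top homology vanishes; and for $G = K_2$ with vertices $s<t$, the single element $\phi_{K_2}((st)) = st + ts$ is easily checked to generate $H_1(\Delta(K_2)) \cong \F_2$.

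For the inductive step, let $e = \{s,t\}$ be the maximal edge of $G$, and use the partition $\D(G) = \A \sqcup \B \sqcup \C$ from the proof of Proposition~\ref{prop:distinct derangements}, with $\A = \D(G-e)$, $\B = \{u_{st} : u \in \D(G/e)\}$, and $\C = \{u \sqcup (st) : u \in \D(G-[e])\}$. The principal tool is the collapsing map $\pi = \pi_{G-e}^G \colon \Delta(G) \to \Delta(G-e)$, for which I claim: (i) $\pi_*(\phi_G(w)) = \phi_{G-e}(w)$ for all $w \in \A$, and (ii) $\pi_*(\phi_G(w)) = 0$ for all $w \in \B \cup \C$. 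Claim (i) follows immediately from the factorization $\pi_{G-e} = \pi \circ \pi_G$. For (ii), observe that because $t$ is the maximum vertex of $G$, the algorithm for $\phi$ processes the $\star$ immediately to the left of $t$ first, producing a factor $\pr{s}{t} = st + ts$ (standalone when $w \in \C$, or nested when $w \in \B$). On full expansion, every cell of $\phi_G(w)$ thus appears in a pair differing by an $st \leftrightarrow ts$ swap, and these pairs are identified by $\pi$ and cancel in $\F_2$.

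Applying $\pi_*$ to any vanishing linear combination $\sum c_w \phi_G(w) = 0$ reduces it to $\sum_{w \in \A} c_w \phi_{G-e}(w) = 0$, which by the inductive hypothesis for $G-e$ forces $c_w = 0$ for every $w \in \A$. Handling the residual relation $\sum_{w \in \B \cup \C} c_w \phi_G(w) = 0$ is the heart of the argument. The plan here is to introduce two homomorphisms that realize the recursive structure of $\D(G)$ on the chain level: an expansion map $\iota \colon C_*(\Delta(G/e)) \to C_{*+1}(\Delta(G))$ that replaces the contracted vertex $x$ by $st+ts$ in each top-dimensional cell, and a multiplication map $\mu \colon C_*(\Delta(G-[e])) \to C_{*+2}(\Delta(G))$ given by concatenation with $\pr{s}{t}$ inserted in the position dictated by the standard cycle ordering. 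Since $\d(st+ts) = 0$ over $\F_2$, both maps send homological cycles to homological cycles, and a direct unwinding of the defining algorithm verifies that $\iota(\phi_{G/e}(u)) = \phi_G(u_{st})$ for $u \in \D(G/e)$ and $\mu(\phi_{G-[e]}(u)) = \phi_G(u \sqcup (st))$ for $u \in \D(G-[e])$.

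The main obstacle is to rule out cross-relations between the images of $\iota$ and $\mu$ in $H_{|G|-1}(\Delta(G))$, since both land in $\ker \pi_*$. My plan is to introduce a secondary projection, parallel to $\pi_*$, that distinguishes the two families: elements of $\C$ are characterized by $\pr{s}{t}$ occurring as a standalone chain factor, whereas elements of $\B$ carry $\pr{s}{t}$ nested within a larger $\pr{\cdot}{\cdot}$-expression. A chain map on $\ker \pi_*$ that ``strips off'' a standalone $\pr{s}{t}$-factor (and annihilates chains without one) should send $\phi_G(u \sqcup (st)) \mapsto \phi_{G-[e]}(u)$ and kill every $\phi_G(u_{st})$. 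Combined with the inductive hypothesis for $G-[e]$, this isolates the $\C$-coefficients and forces them to vanish, reducing the problem to the $\B$-family alone; applying $\iota$ and the inductive hypothesis for $G/e$ then extracts the $\B$-coefficients, completing the induction. Verifying that this secondary stripping-off map is well-defined on $\ker\pi_*$ (in spite of the many equivalences in $\b(G)$ that reorder letters around an $st$-pair) is the technical heart of the argument and where I would expect the proof to require the most care.
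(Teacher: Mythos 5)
Your overall strategy coincides with the paper's: reduce to linear independence via $|\D(G)|=\bn(G)=\dim_{\F_2}H_{|G|-1}(\Delta(G))$, induct on the number of edges, split $\D(G)$ into the three families coming from $G-e$, $G/e$, $G-[e]$, and use the collapsing map $\pi_*\colon H_*(\Delta(G))\to H_*(\Delta(G-e))$ to kill the latter two families (each carries a factor $st+ts$, and $s,t$ commute in $G-e$) and extract the $\A$-coefficients by induction. Up to that point your argument matches the paper's proof step for step.

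The difference is at the step you yourself flag as incomplete: you posit a ``stripping-off'' map that sends $\phi_G(u\sqcup(st))\mapsto\phi_{G-[e]}(u)$ and kills every $\phi_G(u_{st})$, but you do not construct it, and as described it is not clear why it should annihilate the $\B$-family, whose terms also contain $st+ts$ (just nested inside a larger bracket). The paper realizes this map concretely in two moves. First, split the residual relation into its ``$s$ before $t$'' and ``$t$ before $s$'' components; this is well defined because $s$ and $t$ are adjacent in $G$, hence non-commuting in $\b(G)$, so the relative order of $s$ and $t$ is an invariant of each equivalence class --- this disposes of your worry about reorderings around the $st$-pair. Writing $\phi(v_{st})=\phi(v)_{[st]}+\phi(v)_{[ts]}$, the $st$-component of the relation is $\sum_v\epsilon_v\phi(v)_{[st]}+\sum_w\epsilon_w\phi(w)st=0$. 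Second, apply $\d_s\circ\d_t$ to this equation, landing in the subcomplex of chains avoiding $s$ and $t$, which is identified with the subcomplex of $C_*(\Delta(G/e))$ avoiding $x$. Deleting both $s$ and $t$ from $\phi(v)_{[st]}$ is the same as deleting $x$ from $\phi(v)$, so those terms contribute $\d_x\phi(v)=0$ because $\phi(v)$ is already a homological cycle --- this is the observation your sketch is missing --- while $\phi(w)st\mapsto\phi(w)$. Induction on $G-[e]$ then kills the $\C$-coefficients, and the remaining relation $\sum_v\epsilon_v\phi(v_{st})=0$ is handled by induction on $G/e$ via the injectivity of $x\mapsto st$. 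So your plan is salvageable and is in essence the paper's, but the construction and verification of the separating map is the genuine content of the inductive step, and it is the part your proposal leaves open.
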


\begin{proof}
Lemma \ref{lem:IsCycle} shows that the derangement basis consists of homological cycles, and  Proposition~\ref{prop:distinct derangements} shows that $|\D(G)| = \bn(G)$, which is the $\F_2$-rank of $H_{|G|-1}(\Delta(G))$.  Thus it remains to show that the elements of the derangement basis are linearly independent. We prove this by induction, assuming that the result holds for graphs with fewer edges. The base case is trivial to show.

Equation~\eqref{eq:recursive D} and the fact that $\phi$ is injective allow us to rewrite the derangement basis associated to $G$ as
a disjoint union
\begin{equation}\label{eqn:three sets for basis}
\{\phi(w) : w \in \D(G - e)\} \cup \{\phi(w_{st}) : w \in \D(G/e) \} \cup \{\phi(w\sqcup(st)) : w \in \D(G - [e])\}.
\end{equation}
The graphs $G - e$, $G/e$, and $G-[e]$ all have fewer edges than $G$, so by induction their associated derangement bases are linearly independent sets.  Using this fact, it follows that each of the three sets in the union \eqref{eqn:three sets for basis} is a linearly independent set.

Suppose that 
\begin{equation}\label{eq:linear combination}
\sum_{u \in \D(G_e)} \epsilon_u \phi(u) + \sum_{v \in \D(G/e)} \epsilon_v \phi(v_{st}) + \sum_{w \in \D(G-[e])} \epsilon_w \phi(w\sqcup(st)) = 0.
\end{equation}

If we apply the collapsing map $\pi: H_*(\Delta(G)) \rightarrow H_*(\Delta(G-e))$ of Section~\ref{subsec:Collapse} to equation~\eqref{eq:linear combination}, then elements arising from $\D(G/e)$ and $\D(G-[e])$ are mapped to 0, since they can be written as a sum of terms of products involving $st + ts$.  This leaves $\sum_{u \in \D(G_e)} \epsilon_u \phi(u) = 0$.  Elements of $\{\phi(w) : w \in \D(G - e)\}$ are linearly independent, so this implies that $\epsilon_u = 0$ for all $u \in \D(G_e)$.

We must now consider a linear combination
$$\sum_{v \in \D(G/e)} \epsilon_v \phi(v_{st}) + \sum_{w \in \D(G-[e])} \epsilon_w \phi(w\sqcup(st)) = 0.$$
If $x$ is the vertex in $G/e$ obtained by contracting the edge $e$, then $\phi(v_{st})$ is obtained from $\phi(v)$ by replacing $x$ by $st+ts$.  Write $\phi(v_{st}) = \phi(v)_{[st]} + \phi(v)_{[ts]}$, where the former is obtained by replacing $x$ in $\phi(v)$ by $st$ and the latter is obtained by replacing $x$ in $\phi(v)$ by $ts$.  Thus the above equation can be rewritten as
$$\sum_{v \in \D(G/e)} \epsilon_v \left(\phi(v)_{[st]} + \phi(v)_{[ts]}\right) + \sum_{w \in \D(G-[e])} \epsilon_w \phi(w)(st+ts) = 0.$$
If we look at those terms involving $st$, and analogously at those involving $ts$, we see that
\begin{equation}\label{eq:linear combination - two}
\sum_{v \in \D(G/e)} \epsilon_v \phi(v)_{[st]} + \sum_{w \in \D(G-[e])} \epsilon_w \phi(w)st = 0.
\end{equation}
Notice that we can identify the sub-chain complex $C_*(\Delta(G))_{\tilde{s},\tilde{t}} $ of $C_*(\Delta(G))$ generated by strings that do not involve $s$ or $t$ with the sub-chain complex  $C_*(\Delta(G/e))_{\tilde{x}}$ of $C_*(\Delta(G/e))$ generated by strings that do not involve $x$. Applying the differential maps 
$\d_s$ and $\d_t$ to equation~\eqref{eq:linear combination - two}, we get an equation in $C_*(\Delta(G))_{\tilde{s},\tilde{t}} $ which is equivalent to the equation 
\begin{equation}\label{eqn:apply partials}
\d_x \left(\sum_{v \in \D(G/e)} \epsilon_v \phi(v)\right) + \sum_{w \in \D(G-[e])} \epsilon_w \phi(w) = 0,
\end{equation}
in $C_*(\Delta(G/e))_{\tilde{x}}$,
since deleting both $s$ and $t$ from a term in $\phi(v_{st})$, or in $\phi(v)_{[st])}$, is equivalent to deleting $x$ from that term in $\phi(v)$.

Recall that $\d_x\phi(v) = 0$ for all $v \in \D(G/e)$.  Therefore, equation~\eqref{eqn:apply partials} reduces to
$$\sum_{w \in \D(G-[e])} \epsilon_w \phi(w) = 0.$$
Elements of $\{\phi(x) : w \in \D(G-[e])\}$ are linearly independent, so $\epsilon_w = 0$ for all $w \in \D(G-[e])$.  Finally, equation~\eqref{eq:linear combination} now simplifies to
$$\sum_{v \in \D(G/e)} \epsilon_v \phi(v_{st}) = 0,$$
from which it follows that $\epsilon_v = 0$ for all $v \in \D(G/e)$.

Thus all coefficients in equation~\eqref{eq:linear combination} must be 0, and the derangement basis associated to $G$ is linearly independent.
\end{proof}

\section{Examples}\label{section:examples}

We now consider several families $\mathcal{G}$ of graphs, and show that with certain well-chosen orderings of the vertices, the sets $\D(G)$ for $G \in \mathcal{G}$ have particularly nice properties.  The families considered here are complete graphs, Ferrers graphs for staircase shapes, and unlabeled graphs for the classical and affine Coxeter groups.  The results regarding the sets $\D(G)$ support previously obtained results in \cite{ragnarsson-tenner} and \cite{ckrt} for boolean numbers.

\subsection{Complete graphs}

Let $K_n$ denote the complete graph on $n$ vertices.  It was shown in \cite{ragnarsson-tenner}, as well as in \cite{reiner-webb} in terms of injective words, that the boolean number of the complete graph is equal to the derangement number. That is,
\begin{equation}\label{eqn:K_n=d_n}
\beta(K_n) = d_n.
\end{equation}
This equality was proved independently in both  \cite{reiner-webb} and \cite{ragnarsson-tenner} by demonstrating that the sequences $\{\bn(K_n)\}$ and $\{d_n\}$ satisfy the same recurrence relation and have the same initial values.  However, a combinatorial understanding of the identity was lacking.  We now justify equation~\eqref{eqn:K_n=d_n} combinatorially, thus giving a satisfying understanding of the relationship between the spheres in $\Delta(K_n)$ and derangements, by explaining the connection between derangements of $\{1,\ldots, n\}$ and generators of the homology of $\Delta(K_n)$.

\begin{cor} \label{cor:Kn}
For any ordering of the vertices of $K_n$, we have $\D(K_n) = \D_{V(K_n)}$.
\end{cor}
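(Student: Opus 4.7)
The plan is to invoke the closed-form criterion of Theorem \ref{thm:criterion}, which reduces the entire question to an adjacency check in $K_n$. Since Lemma \ref{lem:DforDerangment} already gives the inclusion $\D(K_n)\subseteq \D_{V(K_n)}$, I only need to show the reverse inclusion: every derangement $w$ of $V(K_n)$ satisfies the criterion at every vertex $t$, namely that $\lambda_w(t)$ is adjacent in $K_n$ to some vertex of $\rho_w(t)$. Because $K_n$ is complete, two vertices are adjacent iff they are distinct, so the criterion amounts to the purely combinatorial condition $\rho_w(t)\not\subseteq\{\lambda_w(t)\}$.

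I would then verify this condition by a short case split based on Lemma \ref{lem:smallest element}. If $t$ is not the smallest element in its cycle, then $\lambda_w(t) < t$ while every element of $\rho_w(t)$ is $\geq t$ by the defining property of $\rho_w$; hence $\lambda_w(t)\notin\rho_w(t)$, and since $t\in\rho_w(t)$ is automatically adjacent to $\lambda_w(t)$ in the complete graph, the criterion holds at $t$. If instead $t$ is the smallest element in its cycle, then $\lambda_w(t)=t$ and $\rho_w(t)$ is the whole cycle of $t$; since $w$ has no fixed points, this cycle has length at least two, so $\rho_w(t)$ contains some $u\neq t$ that is adjacent to $\lambda_w(t)=t$ in $K_n$. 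In either case the criterion is satisfied.

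There is essentially no obstacle here: the point of the proposition is that the adjacency hypothesis in Theorem \ref{thm:criterion} becomes vacuous on a complete graph, and the combinatorics of $\lambda_w$ and $\rho_w$ automatically force $\rho_w(t)$ to contain a vertex other than $\lambda_w(t)$ whenever $w$ is a derangement. The only care needed is to record that $\rho_w(t)\setminus\{t\}\subseteq\{v:v>t\}$, which is immediate from Definition \ref{def:Canopy}, and to note that this argument is independent of the chosen linear order on $V(K_n)$, yielding the stated conclusion $\D(K_n)=\D_{V(K_n)}$ for every ordering.
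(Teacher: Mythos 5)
Your proposal is correct and follows the same route as the paper: both deduce the result from Theorem \ref{thm:criterion} together with the observation that in $K_n$ adjacency is just distinctness. You simply spell out the case analysis (via Lemma \ref{lem:smallest element}) that the paper's one-line proof leaves implicit, which is a harmless elaboration rather than a different argument.
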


\begin{proof}
The criterion provided in Theorem~\ref{thm:criterion}, together with the fact that all possible edges exist in the complete graph, indicate that every possible derangement can be obtained.  Thus we see bijectively that $\D(K_n) = \D_{V(K_n)}$.
\end{proof}

Note that this could also be shown by a simple counting argument, as follows.  Proposition~\ref{prop:distinct derangements} shows that $\D(K_n)$ contains $\bn(K_n)$ distinct derangements, and $\bn(K_n) = d_n$ by \cite{reiner-webb} and \cite{ragnarsson-tenner}.  Since $\D(G) \subseteq \D_{V(G)}$ for all graphs $G$, and $|\D_{V(G)}| = d_{|V(G)|}$, we see that in fact $\D(K_n) = \D_{V(K_n)}$.

\begin{example}
Applying Theorem~\ref{thm:main} to the complete graph with vertices $\{1, 2, \ldots, n\}$ gives the homology generators described below, along with the corresponding derangements.
$$\begin{array}{r|l|l}
n & \D(K_n)& \text{Derangement basis of } K_n\\
\hline
1 & \emptyset & \emptyset\\
\hline
2 & (12) & \pr{1}{2} = 12+21\\
\hline
3 & (123) & \pr{1}{\pr{2}{3}} = 123+132+231+321\\
& (132) & \pr{\pr{1}{3}}{2} = 132+312+213+231\\
\hline
4 & (1234) & \pr{1}{\pr{2}{\pr{3}{4}}}\\
& & \hspace{.25in} = 1234+1243+1342+1432+2341+2431+3421+4321\\ 
& (1243) & \pr{1}{\pr{\pr{2}{4}}{3}}\\
& & \hspace{.25in} = 1243+1423+1324+1342+2431+4231+3241+3421\\
& (1324) & \pr{\pr{1}{3}}{\pr{2}{4}}\\
& & \hspace{.25in} = 1324+3124+1342+3142+2413+2431+4213+4231\\
& (1342) & \pr{\pr{1}{\pr{3}{4}}}{2}\\
& & \hspace{.25in} = 1342+1432+3412+4312+2134+2143+2341+2431\\
& (1423) & \pr{\pr{1}{4}}{\pr{2}{3}}\\
& & \hspace{.25in} = 1423+4123+1432+4132+2314+2341+3214+3241\\
& (1432) & \pr{\pr{\pr{1}{4}}{3}}{2}\\
& & \hspace{.25in} = 1432+4132+3142+3412+2143+2413+2314+2341\\
& (12)(34) & \pr{1}{2}\pr{3}{4}  = 1234+2134+1243+2143\\
& (13)(24) & \pr{1}{3}\pr{2}{4} = 1324+3124+1342+3142\\
& (14)(23) & \pr{1}{4}\pr{2}{3} = 1423+4123+1432+4132
\end{array}$$
\end{example}

\subsection{Ferrers graphs for staircase shapes}

We now use the main result of this paper to give an explanation of an enumerative result obtained in \cite{ckrt} for certain Ferrers graphs.

\begin{defn}
For $r \ge 1$, the \emph{staircase shape of height $r$} is the Ferrers shape $\sigma_r = (r, r-1, \ldots, 2, 1)$.  The \emph{Ferrers graph} $F_r$ of the staircase shape $\sigma_r$ is the bipartite graph on $2r$ vertices where $r$ vertices describe the rows of $\sigma_r$, $r$ vertices describe the columns of $\sigma_r$, and two vertices are adjacent if there is a square in the corresponding row and column of the shape $\sigma_r$.
\end{defn}

It was shown in \cite{ckrt} that
$$\bn(F_r) = g_r$$
for all $r \ge 1$, where $\{g_r\}$ are the median Genocchi numbers, also called the Genocchi numbers of the second kind.  This is sequence A005439 of \cite{oeis}.

\begin{defn}
A permutation $w \in \mf{S}_{2r}$ has \emph{alternating excedances} if $w(i) > i$ whenever $i$ is odd and $w(i) < i$ whenever $i$ is even.  Such a permutation is always a derangement because $w(i) \neq i$ for all $i$.  When written in standard cycle form, this means that each odd number is followed by a larger number, and each even number is followed by a smaller number.  The set of permutations in $\mf{S}_{2r}$ with alternating excedances will be denoted $AE_{2r}$.
\end{defn}

The median Genocchi number $g_r$ is equal to the number of permutations $w \in \mf{S}_{2r}$ such that $w$ has alternating excedances (see \cite{ehrenborg-steingrimsson}); that is,
$$g_r = |AE_{2r}|.$$

\begin{example}
The second median Genocchi number is $g_2 = 2$, and the permutations in $\mf{S}_4$ with alternating excedances are $(12)(34)$ and $(1342)$.
\end{example}

We now explain why $\bn(F_r) = g_r$ by demonstrating an ordering of the vertices of $F_r$ for which $\D(F_r)$ is equal to the set $AE_{2r}$.  The vertices of $F_r$ arise from the rows and columns of the staircase shape $\sigma_r$, so we can define an ordering on the vertices of $F_r$ in terms of the staircase shape.

\begin{defn}
Given a staircase shape $\sigma_r$, label the rows $1, 3, 5, \ldots, 2r-1$ so that the labels increase as the lengths of the rows decrease.  In contrast, label the columns $2, 4, 6, \ldots, 2r$ so that the labels decrease as the lengths of the columns decrease.  This gives a corresponding ordering on the vertices of $F_r$.
\end{defn}

The labeling on the shape $\sigma_r$ is depicted in Figure~\ref{fig:ferrers labels}.  Therefore, the bipartite graph $F_r$ consists of one set of vertices $\{1,3,5,\ldots, 2r-1\}$ and a second set of vertices $\{2,4,6,\ldots, 2r\}$.  The edges in this graph are exactly those of the form $\{2i+1,2j\}$ where $i < j$.  That is, each odd vertex is connected to every even vertex having a larger label.  Equivalently, each even vertex is connected to every odd vertex having a smaller label.  The correspondingly labeled Ferrers graph for the case $r=4$ is given in Figure~\ref{fig:ferrers graph labels}.

\begin{figure}[htbp]
\begin{center}
\begin{tikzpicture}[scale=.75]
\foreach \x in {1,2,3,4,5}{\draw (0,\x-1) -- (\x,\x-1);}
\draw (0,5) -- (5,5); \draw (0,5) -- (0,0);
\foreach \x in {1,2,3,4,5}{\draw (6-\x,5) -- (6-\x,5-\x);}
\draw (-1,4.5) node {$1$};
\draw (-1,3.5) node {$3$};
\draw (-1,2.5) node {$5$};
\draw (-1,1.5) node {$\vdots$};
\draw (-1,.5) node {$2r-1$};
\draw (.5,5.5) node {$2r$}; 
\draw (1.5,5.5) node {$\cdots$};
\draw (2.5,5.5) node {$6$};
\draw (3.5,5.5) node {$4$};
\draw (4.5,5.5) node {$2$};
\end{tikzpicture}
\end{center}
\caption{Linear ordering of the rows and columns in the staircase shape $\sigma_r$.}\label{fig:ferrers labels}
\end{figure}
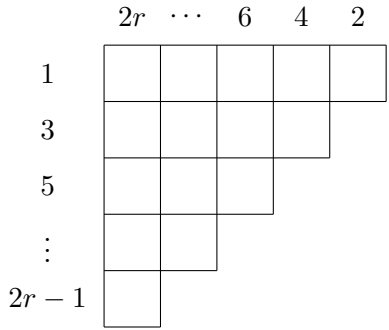

\begin{figure}[htbp]
\begin{center}
\begin{tikzpicture}
\foreach \x in {0,1,2,3}{\fill[black] (0,\x) circle (2pt); \fill[black] (2,\x) circle (2pt);}
\draw (0,0) node[left] {$7$}; \draw (0,1) node[left] {$5$}; \draw (0,2) node[left] {$3$}; \draw (0,3) node[left] {$1$};
\draw (2,0) node[right] {$8$}; \draw (2,1) node[right] {$6$}; \draw (2,2) node[right] {$4$}; \draw (2,3) node[right] {$2$};
\foreach \x in {0,1,2,3}{\draw (0,3) -- (2,\x);}
\foreach \x in {0,1,2}{\draw (0,2) -- (2,\x);}
\foreach \x in {0,1}{\draw (0,1) -- (2,\x);}
\foreach \x in {0}{\draw (0,0) -- (2,\x);}
\end{tikzpicture}
\end{center}
\caption{Labeling of the Ferrers graph $F_4$.}\label{fig:ferrers graph labels}
\end{figure}
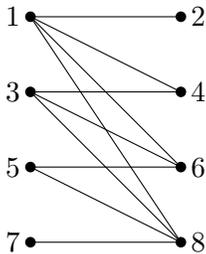

\begin{cor}
For all $r \ge 1$, $\D(F_r) = AE_{2r}$.
\end{cor}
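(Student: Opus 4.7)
The plan is to reduce the claim to a single inclusion via a cardinality argument and then verify the criterion of Theorem~\ref{thm:criterion} directly on permutations in $AE_{2r}$. By Proposition~\ref{prop:distinct derangements}, $|\D(F_r)| = \bn(F_r)$, which equals $g_r$ by the cited result of \cite{ckrt}, and $|AE_{2r}| = g_r$ by \cite{ehrenborg-steingrimsson}. Hence $|\D(F_r)| = |AE_{2r}|$, and it will suffice to prove the inclusion $AE_{2r} \subseteq \D(F_r)$.

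To do this, I would fix $w \in AE_{2r}$ and an arbitrary vertex $t$ of $F_r$, and show that $\lambda_w(t)$ is adjacent to some element of $\rho_w(t)$. Under the given labeling, the adjacency relation in $F_r$ is precisely: one endpoint odd, one even, and the odd one strictly smaller. Accordingly, the verification splits into two independent claims: (a) $\lambda_w(t)$ is odd, and (b) $\rho_w(t)$ contains an even element. The workhorse will be the defining property of $AE_{2r}$, namely that $w(y) > y$ if and only if $y$ is odd.

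For (a), I would split on whether $t$ is the minimum of its cycle. If it is, then $\lambda_w(t) = t$ and $w(t) > t$ (all other elements of the cycle exceed $t$), forcing $t$ to be odd. Otherwise, $\lambda_w(t) < t$ while $w(\lambda_w(t)) = w^{-\ell+1}(t) \geq t > \lambda_w(t)$, so $\lambda_w(t)$ must again be odd. For (b), if $t$ is even take $t$ itself. If $t$ is odd, consider $z := w^{k-1}(t)$, the last element of $\rho_w(t)$. Then $z \geq t$ (by choice of $k$) and $w(z) = w^k(t) \leq t$; if $z$ were odd, $w(z) > z \geq t$ would contradict $w(z) \leq t$, so $z$ is even. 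Combining (a) and (b), $\lambda_w(t)$ is odd and some even $x \in \rho_w(t)$ satisfies $x \geq t \geq \lambda_w(t)$, with strict inequality $\lambda_w(t) < x$ forced by opposite parities. Thus $\lambda_w(t)$ and $x$ are adjacent in $F_r$, completing the verification of Theorem~\ref{thm:criterion}.

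I do not foresee a substantive obstacle. The only mild subtlety is handling the minimum-of-cycle case uniformly in (a) and (b), where $\lambda_w(t)$ coincides with $t$ rather than lying strictly below it; the parity argument covers this case automatically, since $\lambda_w(t)$ odd and $x$ even preclude $\lambda_w(t) = x$. The essential point is that the $AE_{2r}$ rule and the $F_r$ adjacency rule conspire so that the parity of $\lambda_w(t)$ alone controls validity.
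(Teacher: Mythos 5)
Your proposal is correct, and your verification of the inclusion $AE_{2r} \subseteq \D(F_r)$ is essentially the paper's forward direction (indeed you handle the case where $t$ is minimal in its cycle more explicitly than the paper does). Where you diverge is in the reverse inclusion: the paper proves $\D(F_r) \subseteq AE_{2r}$ directly, by taking $w \in \D(F_r) \setminus AE_{2r}$, locating either an odd letter followed by a smaller one or an even letter followed by a larger one in the standard cycle form, and contradicting the criterion of Theorem~\ref{thm:criterion} using the adjacency rule of $F_r$. You instead close the argument by counting: $|\D(F_r)| = \bn(F_r) = g_r = |AE_{2r}|$, citing \cite{ckrt} and \cite{ehrenborg-steingrimsson}. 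This is logically valid for the corollary as stated, and it is exactly the shortcut the authors themselves point out for the complete-graph case. But it costs you something the paper is explicitly after: the section is framed as giving a \emph{bijective} explanation of the identity $\bn(F_r) = g_r$, and the paper's two-inclusion argument, combined with $|\D(F_r)| = \bn(F_r)$ from Proposition~\ref{prop:distinct derangements}, constitutes a new proof of that identity. Your argument consumes $\bn(F_r) = g_r$ as an input, so it cannot be recycled to reprove it --- the reasoning would be circular for that purpose. If you want the full payoff of the section, supply the second inclusion directly; the contrapositive argument is short and uses only the same parity-versus-adjacency dichotomy you already set up.
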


\begin{proof}
As noted earlier, elements of $AE_{2r}$ are derangements whose cycle notations are such that every odd number is followed by something larger and every even number is followed by something smaller.  It is this description which motivates the linear ordering of the vertices of $F_r$, since, as can be seen in the example of Figure~\ref{fig:ferrers graph labels}, each odd vertex is adjacent to all larger even vertices, and each even vertex is adjacent to all smaller odd vertices.

Suppose that $w \in AE_{2r}$.  For any letter $t$ which is not minimal in its cycle in $w$, there are two key facts to note, both of which follow from the above characterization of $AE_{2r}$:
\begin{enumerate}
\item $\lambda_w(t)$ is odd and necessarily smaller than $t$, and
\item $\rho_w(t)$ contains an even number, which is necessarily at least as large as $t$.
\end{enumerate}
In $F_r$, every odd vertex is adjacent to all larger even vertices.  Therefore $\lambda_w(t)$ is adjacent to at least one element of $\rho_w(t)$ in the graph $F_r$.  Hence, Theorem~\ref{thm:criterion} implies that such a $w$ is an element of $\D(F_r)$, and thus $AE_{2r} \subseteq \D(F_r)$.

Now consider $w \in \D(F_r) \setminus AE_{2r}$.  There are two possibilities for the standard cycle form of $w$: either an odd number is followed by something smaller, or an even number is followed by something larger.  Suppose that the first case occurs, and that the odd number is $t$.  By definition, we have $\lambda_w(t) \le t$.  Moreover, since $t$ is followed by a smaller number, we know that $t$ is not the minimal letter in its cycle, so in fact $\lambda_w(t) < t$.  Theorem~\ref{thm:criterion} says that $\lambda_w(t)$ must be adjacent to an element of $\rho_w(t) = \{t\}$ in $F_r$.  However, odd vertices are not adjacent to smaller vertices in $F_r$, so this is a contradiction.  Now suppose that there is an even number $t$ followed by a larger number $s$ in the cycle notation of $w$.  Every element of $\rho_w(s)$ is greater than or equal to $s$, and so strictly greater than $t$.   Theorem~\ref{thm:criterion} indicates that $\lambda_w(s) = t$ must be adjacent to some element of $\rho_w(s)$, but even vertices are not adjacent to larger vertices in $F_r$, so this is a contradiction as well.  Therefore $w \not\in \D(F_r)$, and so $\D(F_r) \subseteq AE_{2r}$.
\end{proof}

\subsection{Coxeter graphs}

In \cite{ragnarsson-tenner}, the boolean numbers were given for the unlabeled Coxeter graphs of the classical finite and affine Coxeter groups.  We now describe linear orderings of the vertices in each graph and the corresponding derangements arising from these orderings.  The most interesting cases are the non-exceptional ones, since there we see patterns in the derangements that explain the previous calculations of $\bn$.

\begin{defn}
Given a string $S = s_1 s_2 \cdots s_n$ of distinct letters, a \emph{valid parsing} of $S$ is a way to partition the letters of $S$ into parts of length at least 2.  We will view the parts as cycles and the product of these cycles as a derangement.
\end{defn}

\begin{example}
There is one valid parsing of $s_1s_2$: $(s_1s_2)$.  Similarly, there is only one valid parsing of $s_1s_2s_3$: $(s_1s_2s_3)$.  There are two valid parsings of $s_1s_2s_3s_4$: $(s_1s_2s_3s_4)$ and $(s_1s_2)(s_3s_4)$.
\end{example}

If no other conditions are imposed, then it is straightforward to show that the number of valid parsings of $s_1s_2 \cdots s_n$ is $f_n$, the $(n-1)$st Fibonacci number.

\begin{defn}
Given a string $S$, let $VP(S)$ denote the set of all valid parsings of $S$.
\end{defn}

Many of the derangements described in this section will be described in terms of valid parsings.  The proofs are fairly straightforward, and details are left to the reader.  We will let the standard Coxeter group names indicate the corresponding unlabeled Coxeter graphs.  For example, the graph $A_n$ consists of an $n$-vertex path.

\begin{defn}
For a path of $n$ vertices, the \emph{path ordering} is the labeling in which the vertex labels increase from one end to the other.
\end{defn}

\begin{cor}\label{cor:coxeter}
\begin{enumerate}
\item Given the path ordering, $\D(A_n) = VP(12\cdots n).$
\item For the graph $D_n$, give the path ordering to one path of length $n-1$, so that the degree-three vertex is labeled $2$; label the remaining leaf $n$.  Then
$$\D(D_n) = \{p \in VP(12n3\cdots(n-2)(n-1)) : 2 \text{ and } n \text{ are in the same cycle of } p\}.$$
\item For the graph $E_n$, give the path ordering to the path of length $n-1$ so that the degree-three vertex is labeled $3$; label the remaining leaf $n$.  Then
$$\D(E_n) = \{p \in VP(123n45\cdots(n-1)) : 3 \text{ and } n \text{ are in the same cycle of } p\}.$$
\end{enumerate}
\end{cor}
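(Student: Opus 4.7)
The plan is to apply the recursive formula \eqref{eq:recursive D} at the maximal edge of each graph. Part (1) proceeds by induction on $n$; parts (2) and (3) each follow in one application of the recursion once part (1) is in hand, so Theorem~\ref{thm:criterion} will not be needed. For part (1) I induct on $n$, with base case $n = 2$ giving $\D(A_2) = \{(12)\} = VP(12)$ directly. For $n \geq 3$ the maximal edge of $A_n$ is $e = \{n-1, n\}$, and the three edge operations yield $A_n - e = A_{n-1} \sqcup \{n\}$ (isolated vertex $n$, contributing $\emptyset$), $A_n/e \cong A_{n-1}$, and $A_n - [e] \cong A_{n-2}$. By \eqref{eq:recursive D} and the inductive hypothesis,
\[
\D(A_n) = \{w_{(n-1)n} : w \in VP(12\cdots(n-1))\} \cup \{w \sqcup (n-1, n) : w \in VP(12\cdots(n-2))\}.
\]
In any valid parsing of $12\cdots(n-1)$, the letter $n-1$ is the rightmost element of the last cycle, so the first family produces exactly the valid parsings of $12\cdots n$ whose last cycle has length at least $3$, while the second produces those whose last cycle equals $(n-1, n)$. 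Together they exhaust $VP(12\cdots n)$.

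For part (2), the maximal vertex $n$ of $D_n$ is a leaf attached to $2$, so the maximal edge is $\{2, n\}$. Deletion isolates $n$, and extraction isolates $1$ (since its only neighbor was $2$), so both contribute $\emptyset$. Contraction gives $D_n/\{2, n\} \cong A_{n-1}$, so part (1) reduces the claim to
\[
\D(D_n) = \{w_{2n} : w \in VP(12\cdots(n-1))\}.
\]
In any such $w$ the letter $2$ lies in the first cycle $(1, 2, \ldots, k)$, so $w_{2n}$ has first cycle $(1, 2, n, 3, \ldots, k)$, putting $2$ and $n$ together. The map $w \mapsto w_{2n}$ is thus a bijection (with inverse the deletion of $n$) onto the valid parsings of $1, 2, n, 3, \ldots, (n-1)$ in which $2$ and $n$ share a cycle.

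For part (3), the maximal edge of $E_n$ is $\{3, n\}$. Deletion isolates $n$; contraction gives $E_n/\{3, n\} \cong A_{n-1}$; and extraction gives $E_n - [\{3, n\}]$, the disjoint union of the edge on $\{1, 2\}$ and the path on $\{4, \ldots, n-1\}$ (which carries an isolated vertex and contributes $\emptyset$ when $n = 5$). The disjoint-sum corollary together with part (1) yields
\[
\D(E_n) = \{w_{3n} : w \in VP(12\cdots(n-1))\} \cup \{(12) \sqcup (3, n) \sqcup p : p \in VP(45\cdots(n-1))\}.
\]
Since in any $w$ the letter $3$ is either the third letter of the first cycle or the first letter of the second cycle, $w_{3n}$ always places $3$ and $n$ in one cycle. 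A valid parsing of $1, 2, 3, n, 4, \ldots, (n-1)$ with $3$ and $n$ in the same block must have that block start at position $1$ or $3$ (position $2$ would leave a prior block $(1)$ of length $1$), and the three cases---block $(3, n)$, block $(3, n, 4, \ldots, \ell)$ with $\ell \geq 4$, and block $(1, 2, 3, n, 4, \ldots, k)$ with $k \geq 3$---correspond respectively to the extraction contribution, to contraction of a $w$ with first cycle $(1, 2)$, and to contraction of a $w$ with first cycle of length $\geq 3$.

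The whole argument is essentially bookkeeping: identifying the maximal edge, checking that the three edge operations have the claimed effect, and confirming that the substitutions $w \mapsto w_{(n-1)n}$, $w \mapsto w_{2n}$, $w \mapsto w_{3n}$, together with the adjunction of small explicit cycles, produce exactly the claimed valid parsings. The only subtleties are tracking the location of the maximal vertex's unique neighbor inside a valid parsing, and handling the small-$n$ degenerate cases of part (3), where $E_n - [\{3, n\}]$ carries an isolated vertex or reduces to a single edge.
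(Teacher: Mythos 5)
Your argument is correct and complete. The paper itself gives no proof of this corollary --- it states only that ``the proofs are fairly straightforward, and details are left to the reader'' --- so there is nothing to compare line by line; but your route, unwinding the recursion \eqref{eq:recursive D} at the maximal edge, is one of the two natural ways to fill in the omitted details, and you execute it cleanly. The key observations all check out: blocks of a valid parsing are consecutive, so the block containing the unique neighbor of the maximal vertex ($n-1$, $2$, or $3$ respectively) is forced to start at position $1$ or (in the $E_n$ case) position $3$, and the substitutions $w \mapsto w_{(n-1)n}$, $w_{2n}$, $w_{3n}$ together with the extraction terms account for exactly the claimed parsings. The alternative route, which the paper does use for its analogous Ferrers-graph corollary, is to verify the closed-form criterion of Theorem~\ref{thm:criterion} directly: for these trees and near-trees, $\lambda_w(t)$ being adjacent to some element of $\rho_w(t)$ forces consecutive blocks, which is arguably shorter but less transparent about where each family of parsings comes from. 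One small point worth making explicit in your part (1) is the degenerate base of the extraction branch for $n=3$, where $A_{n-2}=A_1$ is a single isolated vertex and $VP(1)=\emptyset$, so the formula still holds; you flag the analogous issue for $E_5$ but not here.
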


The remaining finite Coxeter groups' graphs are all special cases of $A_n$, and their corresponding derangements can be analogously defined using the path ordering.

Results for the affine Coxeter groups are similar, with the only substantially different case being the group $\widetilde{A}_n$, which is a cycle on $n+1$ vertices.

\begin{cor}\label{cor:affine}
Consider the ordering of $V(\widetilde{A}_n)$ where the label $i$ is between the labels $i\pm1$, modulo $n$.  Then
\begin{multline*}
\D(\widetilde{A}_n) = VP(12\cdots n) \ \cup \\
\bigcup_{k=3}^n \big\{p \in VP\big(1k(k+1)\cdots n2 \cdots (k-1)\big) : \{1,k,\ldots,n\} \text{ is in the same cycle of } p\big\}.
\end{multline*}
\end{cor}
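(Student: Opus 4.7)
The plan is to apply the adjacency criterion of Theorem~\ref{thm:criterion} directly, exploiting the minimal adjacency of $\widetilde{A}_n$: each vertex $i$ has only two neighbors, namely $i \pm 1 \pmod n$. For a permutation $w$ in standard cycle form, I will write $C_1 = (1, b_1, \ldots, b_r)$ for the cycle of $w$ containing $1$; the value of $b_1$ will identify which family $w$ belongs to on the right-hand side, and the iterative analysis of $b_2, b_3, \ldots$ via the criterion will pin down the entire structure of $C_1$.

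For the backward direction, if $w \in VP(12\cdots n)$, each cycle is a consecutive increasing run $(a, a+1, \ldots, a+\ell)$, so at a non-minimal $t = a+i$ we have $\lambda_w(t) = t-1$ adjacent to $t \in \rho_w(t)$, and at the minimum $a$ we have $\lambda_w(a) = a$ adjacent to $a+1 \in \rho_w(a)$; the criterion is thus satisfied at every vertex. For a member of the second family, the first cycle has the form $C_1 = (1, k, k+1, \ldots, n, 2, 3, \ldots, j)$ (with $j = 0$ meaning no tail), and the remaining vertices form a valid parsing of $(j+1)(j+2)\cdots(k-1)$. The criterion is verified vertex by vertex; the key observations are $\lambda_w(1) = 1$ adjacent to $n \in \rho_w(1)$ and $\lambda_w(k) = 1$ adjacent to $n \in \rho_w(k)$, both of which exploit the wrap-around edge $\{1, n\}$ that distinguishes $\widetilde{A}_n$ from the path $A_n$. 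The remaining vertices of $C_1$ and the subsequent cycles are handled by consecutive-integer adjacencies exactly as in the first family.

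For the forward direction, suppose $w \in \D(\widetilde{A}_n)$. The criterion at $1$ forces $C_1$ to meet $\{2, n\}$. Since every element of $\rho_w(b_1) \setminus \{b_1\}$ strictly exceeds $b_1$, the criterion at $b_1$ --- where $\lambda_w(b_1) = 1$ has only neighbors $2$ and $n$ --- forces either $b_1 = 2$ or $b_1 = k \geq 3$ with $n \in \rho_w(b_1)$. In the first sub-case, iterating the criterion at $b_i$, using $\lambda_w(b_i) = b_{i-1}$ and noting that among the two neighbors of $b_{i-1}$ only $b_{i-1}+1$ can lie in $\rho_w(b_i)$, forces $b_i = i+1$, yielding $C_1 = (1, 2, \ldots, m)$ for some $m \leq n$; the same argument applied to the induced subgraph on $\{m+1, \ldots, n\}$ (which is a genuine path, since the wrap edge does not touch it) shows $w \in VP(12\cdots n)$. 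In the second sub-case the same iteration forces $b_i = k+i-1$ up to $b_{n-k+1} = n$; if the cycle extends further, the next element must lie in $\{2, \ldots, k-1\}$, its $\lambda_w$ equals $1$, and since the neighbor $n$ has already been used in $C_1$, the criterion forces $b_{n-k+2} = 2$, after which iteration gives the tail $2, 3, \ldots, j$ for some $j$. The residual vertices $\{j+1, \ldots, k-1\}$ induce a sub-path, and by the path argument $w$ restricted there is a valid parsing, placing $w$ in the $k$th set of the union. Disjointness of the union is immediate because $b_1$, equal to $2$ in the first family and to $k$ in the second, identifies the family uniquely.

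The main obstacle will be the careful bookkeeping in the forward direction, especially verifying at each iterative step that only consecutive-integer adjacencies plus the single wrap-around edge $\{1, n\}$ can satisfy the criterion, which is precisely what forces each $b_i$ to be uniquely determined. The argument is structurally close to Corollary~\ref{cor:coxeter}(1) for $A_n$, with the one additional subtlety that the wrap-around edge $\{1, n\}$ creates the extra family indexed by $k$ and requires distinguishing whether this edge is exploited by the cycle containing $1$.
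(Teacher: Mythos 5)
Your proof is correct. The paper gives no written proof of this corollary (``the proofs are fairly straightforward, and details are left to the reader''), the intended argument being a direct verification via the criterion of Theorem~\ref{thm:criterion}; your case analysis --- checking validity cycle-by-cycle for the backward direction, and using the criterion at $1$, at $b_1$, and then iteratively at each successive cycle element to force the consecutive-block structure (with the wrap edge $\{1,n\}$ accounting exactly for the extra families indexed by $k$) --- is precisely that intended verification, carried out correctly.
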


Note that the statements in Corollaries~\ref{cor:coxeter} and~\ref{cor:affine} are such that the valid parsings satisfying the given conditions are always written in standard cycle form.

\begin{example}
With the ordering given above,
\begin{align*}
\D(\widetilde{A}_5) = &\{(12345), (123)(45), (12)(345), (13452),\\
&\phantom{\{}(145)(23), (14523), (15)(234), (152)(34), (15234)\}.
\end{align*}
\end{example}

\section*{Acknowledgements} We would like to thank Freyja and Patrek, our other recent collaborations, for allowing us to take the time to finish this paper.

\end{document}